\newcolumntype{d}[1]{D{.}{.}{#1}}
\definecolor{mediumspringgreen}{rgb}{0.0, 0.98039215, 0.60392156}
\newcommand\ReviewOnlyText[1]{#1}   
\newcommand\ifpdf
\DeclareMathOperator    \intr                   {int}
\DeclareMathOperator    \relint         {rel\,int}
\DeclareMathOperator    \verts          {vert}
\newcommand{\old}[1]{{}}
\newcommand{\bb}{\mathbb}
\newcommand{\R}{\bb R}
\newcommand{\floor}[1]{\lfloor#1\rfloor}
\newcommand{\ceil}[1]{\lceil #1 \rceil}
\newcommand\st{\mid}
\newcommand{\B}{B}
\newcommand{\setcond}[2]{\left\{ #1 \,\st\, #2 \right\}}
\renewcommand{\P}{\mathcal{P}}
\def\st{\mid}
\newenvironment{psmallmatrixbig}{\bigl(\smallmatrix}{\endsmallmatrix\bigr)}
\newcommand\InlineFrac[2]{#1/#2}  
\newcommand\ColVec[3][\relax]
\let\frac=\InlineFrac\begin{psmallmatrixbig}#2\vphantom{/}\\#3\vphantom{/}\end{psmallmatrixbig}\egroup
\let\frac=\InlineFrac\begin{psmallmatrixbig}\ifx#200\else#2/#1\fi\\\ifx#300\else#3/#1\fi\end{psmallmatrixbig}\egroup
\renewcommand{\pod}[1]
{\allowbreak\mathchoice{\mkern18mu}{\mkern8mu}{\mkern8mu}{\mkern8mu}(#1)}
\chardef\Myunderscore=`\_
  \def\Myunderscore{\textunderscore}%
\newcommand\underscore{\Myunderscore\allowbreak}
\DeclareRobustCommand\sage[1]{\texttt{#1}}
\newcommand\Figure[2][\relax]{%
  \begin{figure}[h!]
    \includegraphics[width=.8\textwidth]{#2}
    \caption{\ifx#1\relax#2\else#1\fi}
  \end{figure}
}
\begin{document}

\begin{verbatim}\end{verbatim}\vspace{2.5cm}

\begin{frontmatter}

\title{
Structure and Interpretation \\ 
of Dual-Feasible Functions
}

\author{Matthias K\"oppe\thanksref{ALL}\thanksref{myemail}}
\address{Dept.\@{} of Mathematics\\ University of California, Davis\\ USA}

\author{Jiawei Wang\thanksref{ALL}\thanksref{coemail}}
\address{Dept.\@{} of Mathematics\\ University of California, Davis\\ USA}
\thanks[ALL]{The authors gratefully acknowledge partial support from the National Science
  Foundation through grant DMS-1320051, awarded to M.~K\"oppe.} 
\thanks[myemail]{Email:
  \href{mailto:mkoeppe@math.ucdavis.edu} {\texttt{\normalshape
      mkoeppe@math.ucdavis.edu}}}
\thanks[coemail]{Email:
  \href{mailto:wangjw@math.ucdavis.edu} {\texttt{\normalshape
      jwewang@ucdavis.edu}}}


\begin{abstract}

We study two techniques to obtain new families of classical and general
Dual-Feasible Functions:  A conversion from minimal Gomory--Johnson functions;
and computer-based search using polyhedral computation and an automatic
maximality and extremality test. 

\end{abstract}

\begin{keyword}
  integer programming, cutting planes, cut-generating functions, Dual-Feasible Functions, 2-slope theorem, computer-based search
\end{keyword}

\end{frontmatter}

\clearpage
\section{Introduction}

The duality theory of integer linear optimization appears in several concrete
forms.  Inspired by the monograph
\cite{alves-clautiaux-valerio-rietz-2016:dual-feasible-book}, we study
\emph{(classical) Dual-Feasible Functions} (DFFs, cDFFs), which are defined as
functions $\phi\colon D \to D$ such that
$\sum_{i\in I}x_i \le 1 \Rightarrow \sum_{i\in I}\phi(x_i) \le 1$ for any
family $\{x_i\}\subseteq D$ indexed by a finite index set $I$, where
$D=[0,1]$. In \cite{alves-clautiaux-valerio-rietz-2016:dual-feasible-book},
these functions are studied alongside with \emph{general DFFs} (gDFFs), which
satisfy the same property for the extended domain $D=\mathbb{R}$
\ReviewOnlyText{(reviewers are invited to refer to Appendix
  \ref{s:preliminaries} for more details)}.

DFFs appear to first have been studied by Lueker
\cite{Lueker:1983:BPI:1382437.1382833} to provide lower bounds for bin-packing
problems. DFFs can derive feasible solutions to the dual problem of the LP
relaxation efficiently, therefore providing fast lower bounds for the primal
IP problem.  The computation of bounds is also the main angle of exposition in
the monograph \cite{alves-clautiaux-valerio-rietz-2016:dual-feasible-book}.
Vanderbeck \cite{vanderbeck2000exact} studied the use of DFFs in several
combinatorial optimization problems including the cutting stock problem, 
generating valid inequalities for these problems.

The \emph{maximal} (pointwise non-dominated) DFFs are of particular interest since
they provide better lower bounds and stronger valid inequalities
. Maximality is not enough if the strongest bounds and inequalities are
expected. A maximal DFF is said to be \emph{extreme} if it can not be written
as a convex combination of two other maximal DFFs. Therefore, a hierarchy on
the set of valid DFFs, which indicates the strength of the corresponding valid
inequalities and lower bounds, has been defined
\cite{alves-clautiaux-valerio-rietz-2016:dual-feasible-book}.  This
development is parallel to the one in the study of cut-generating functions
\cite{yildiz2016cut}, to which there is a close relation that deserves to be
explored in greater depth.  Indeed, the characterization of minimal
cut-generating functions in the Y{\i}ld{\i}z--Cornu\'ejols model
\cite{yildiz2016cut} can be easily adapted to give a full characterization of
maximal general DFFs, which is missing in
\cite{alves-clautiaux-valerio-rietz-2016:dual-feasible-book}%
\ReviewOnlyText{{} (see Appendix~\ref{s:Yildiz-Conrnuejols})}.

The authors of \cite{alves-clautiaux-valerio-rietz-2016:dual-feasible-book}
study analytical properties of extreme DFFs and use them to prove the
extremality of various classes of functions, most of which are piecewise
linear (possibly discontinuous).

In our paper, we complement this study by transferring recent algorithmic
techniques
\cite{basu-hildebrand-koeppe:equivariant,hong-koeppe-zhou:software-abstract-with-link-to-software}
developed by Basu, Hildebrand, Hong, K\"oppe, and Zhou for 
cut-generating functions in the Gomory--Johnson model \cite{igp_survey} to
DFFs.  In our software, available as the feature 
branch \sage{dual\textunderscore feasible\textunderscore functions} in
\cite{hong-koeppe-zhou:software-abstract-with-link-to-software}, we implement
an automatic maximality and extremality test for classical DFFs.  

In our software, written in SageMath \cite{sage}, a comprehensive Python-based
open source computer algebra system, we also provide an electronic compendium
of the known extreme DFFs from
\cite{alves-clautiaux-valerio-rietz-2016:dual-feasible-book}. We hope that it
facilitates experimentation and further study.

The main objective of our paper is to introduce two methods to build new DFFs
in quantity.  In \autoref{s:abc}, we introduce a conversion from
Gomory--Johnson functions to DFFs, which under some conditions generates
maximal or extreme general and classical DFFs.  The Gomory--Johnson model is
well-studied and the literature provides a large library of known functions.
From our conversion, we obtain 2-slope extreme DFFs and maximal DFFs with
arbitrary number of slopes.  

In \autoref{s:com}, we discuss a computer-based search technique, based on our
automatic maximality and extremality test.  We obtain a library of extreme
DFFs with rational breakpoints in $\frac{1}{q}\mathbb{Z}$ for fixed
$q\in \mathbb{N}$. By using computer-based search we find new extreme DFFs
with intriguing structures.
Our work is a starting point for finding new parametric families of DFFs with
special properties
.

Our methods complement those presented in the monograph
\cite{alves-clautiaux-valerio-rietz-2016:dual-feasible-book}, which have a
more analytical flavor, such as building new DFFs from ``simple'' DFFs by the
operation of composition of
functions
.




\section{Relation to Gomory--Johnson functions} 
\label{s:abc}

In this section, we show
that 
new DFFs, especially extreme ones, can be discovered by converting
Gomory--Johnson functions to DFFs. 
We first introduce the Gomory--Johnson cut-generating functions; details can be found in \cite{igp_survey}. Consider the single-row Gomory--Johnson model, which takes the following form:
\begin{equation}
x+\sum_{r\in\mathbb{R}}r\, y(r)= b, \quad b\notin \mathbb{Z}, b>0
\end{equation}
$$x\in \mathbb{Z},\, y:\mathbb{R} \to \mathbb{Z}_+, \,\text{and $y$ has finite support.}$$
Let $\pi\colon\mathbb{R} \to\mathbb{R}$ be a nonnegative function. Then by
definition $\pi$ is a valid Gomory--Johnson function if
$\sum_{r\in\mathbb{R}}\pi(r)\, y(r)\ge1$ holds for any feasible solution
$(x,y)$. Minimal (valid) functions are characterized by subadditivity and
several other properties.

As maximal DFFs are superadditive, underlying the conversion is that
subtracting subadditive functions from linear functions gives superadditive
functions; but the details are more complicated.
\begin{theorem}
\label{thm:main}
Let $\pi$ be a minimal piecewise linear Gomory--Johnson function corresponding
to a row of the form (1) with the right hand side $b$. Assume $\pi$ is
continuous at $0$ from the right. Then there exists $\delta>0$, such that for
all $0<\lambda<\delta$, the function $\phi_\lambda\colon\mathbb{R} \to\mathbb{R}$, defined
by $\phi_\lambda(x)=\frac{bx-\lambda\pi(bx)}{b-\lambda}$, is a maximal general
DFF and its restriction $\phi_\lambda|_{[0,1]}$ is a maximal classical DFF.
These functions have the following properties.

  \begin{enumerate}[(i)]
  \item[(i)] 
  $\pi$ has $k$ different slopes if and only if $\phi_\lambda$ has $k$ different slopes. If $b>1$, then $\pi$ has $k$ different slopes if and only if $\phi_\lambda|_{[0,1]}$ has $k$ different slopes.
   \item[(ii)] 
     The gDFF $\phi_\lambda$ is extreme if $\pi$ is also continuous with only
     2 slope values where its positive slope $s$ satisfies $sb>1$ and
     $\lambda=\frac{1}{s}$. The cDFF $\phi_\lambda|_{[0,1]}$ is extreme if
     $\pi$ and $\lambda$ satisfy the previous conditions and $b>3$. 
  \end{enumerate}
\end{theorem}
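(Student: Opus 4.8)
The plan is to work directly from the defining functional inequality and the known characterizations of minimality (for $\pi$) and maximality (for gDFFs). Since maximal gDFFs are characterized by superadditivity, symmetry, and the normalization $\phi_\lambda(x)=x$ on a suitable region (the general DFF analogue of the Gomory--Johnson conditions, cf.\ the Y{\i}ld{\i}z--Cornu\'ejols adaptation mentioned in the introduction), I would first verify that $\phi_\lambda$ satisfies exactly these conditions. Superadditivity of $\phi_\lambda(x)=\frac{bx-\lambda\pi(bx)}{b-\lambda}$ follows from subadditivity of $\pi$: substituting $u=bx$, $v=by$, the linear term $bx$ contributes additively, and $-\lambda\pi(bu/b)$ turns subadditive into superadditive, with the positive normalizing denominator $b-\lambda$ preserving the direction of the inequality once $\lambda<b$. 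The symmetry condition $\pi(r)+\pi(b-r)=1$ for Gomory--Johnson functions translates, after the substitution, into the gDFF symmetry $\phi_\lambda(x)+\phi_\lambda(1-x)=1$; this is a direct computation using $b\cdot 1 - \lambda\pi(b) $ and $\pi(b)=\pi(0)=0$ (valid functions vanish at integers). The role of ``$\pi$ continuous at $0$ from the right'' is to control the behavior of $\phi_\lambda$ near $0$: it guarantees that $\phi_\lambda(x)\ge 0$ and $\phi_\lambda(x)\le x$ for small $x>0$, which is where the threshold $\delta>0$ enters — one chooses $\delta$ small enough that the piecewise-linear $\pi(bx)$ does not overwhelm the linear term $bx$ on the first linear piece, i.e.\ $\delta < 1/s_{\max}$ where $s_{\max}$ is the largest slope of $\pi$ near $0$. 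The restriction $\phi_\lambda|_{[0,1]}$ being a maximal cDFF then follows because maximal cDFFs are precisely the restrictions of maximal gDFFs to $[0,1]$, a fact I would cite.

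For part (i): the slopes of $\phi_\lambda$ are $\frac{b - \lambda b \cdot \pi'}{b-\lambda} = \frac{b(1-\lambda\pi')}{b-\lambda}$ on each piece where $\pi$ has slope $\pi'$, and $x\mapsto \frac{b(1-\lambda t)}{b-\lambda}$ is an injective affine function of $t$, so distinct slopes of $\pi$ correspond bijectively to distinct slopes of $\phi_\lambda$; composing with the affine change of variable $x\mapsto bx$ does not merge breakpoints. For the claim about $\phi_\lambda|_{[0,1]}$ when $b>1$: the map $x\mapsto bx$ sends $[0,1]$ onto $[0,b]\supseteq[0,1]$, and since the fundamental domain of $\pi$ is $[0,1]$ with $\pi$ determined there, $bx$ ranging over $[0,b]$ with $b>1$ already sweeps a full period, so all $k$ slopes of $\pi$ already appear on $\phi_\lambda|_{[0,1]}$; if $b\le 1$ one could miss slopes, which is why the hypothesis $b>1$ is needed.

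For part (ii), the extremality claims, I expect the main obstacle to lie. The strategy is a perturbation argument parallel to the Gomory--Johnson $2$-slope theorem: suppose $\phi_\lambda = \frac12(\phi^1+\phi^2)$ with $\phi^1,\phi^2$ maximal gDFFs, and show $\phi^1=\phi^2=\phi_\lambda$. The standard technique is to write $\phi^i = \phi_\lambda \pm \psi$ for a perturbation $\psi$ and exploit additivity relations: at every point where $\phi_\lambda$ is additive (equality in superadditivity), $\psi$ must also be additive there, forcing $\psi$ to be ``locally linear'' on the interiors of the linear pieces; the $2$-slope hypothesis with $sb>1$ and $\lambda=1/s$ is exactly what makes one of the two slopes of $\phi_\lambda$ equal to zero — since the slope is $\frac{b(1-\lambda\pi')}{b-\lambda}$ and this vanishes when $\pi'=1/\lambda=s$ — so $\phi_\lambda$ has a flat piece, and a flat piece of a superadditive symmetric function pins down the perturbation to be zero on a set of positive measure, which then propagates via the additivity/interval-lemma machinery to all of $\mathbb{R}$. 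I would invoke the Interval Lemma (as used in \cite{basu-hildebrand-koeppe:equivariant, igp_survey}) to go from ``$\psi$ affine on many small intervals and additive across them'' to ``$\psi$ globally affine'', and then symmetry plus the normalization $\psi(0)=0$, $\psi(1)=0$ kill it. The passage to the cDFF requires $b>3$ because the additivity relations available only within $[0,1]$ (rather than all of $\mathbb R$) are fewer, and one needs the image $[0,b]$ of $bx$ to be long enough — more than three fundamental periods — for the covering/propagation argument to reach every breakpoint of $\phi_\lambda|_{[0,1]}$; making this covering count precise, i.e.\ checking that the additive pairs $(x,y)$ with $x,y,x+y\in[0,1]$ still generate enough relations, is the delicate part of the proof.
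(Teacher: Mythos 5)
Your route is essentially the paper's --- maximality via the sufficient conditions for gDFFs (superadditivity inherited from subadditivity of $\pi$, symmetry inherited from that of $\pi$, nonnegativity near $0$ secured by taking $\delta$ below the reciprocal of the relevant slope), part (i) from the injective affine slope map $s\mapsto b(1-\lambda s)/(b-\lambda)$ together with quasiperiodicity of period $1/b$, and part (ii) by a perturbation argument with the Interval Lemma and a covering count for $b>3$. But two steps as written would fail. First, $\pi(b)\ne 0$: since $b\notin\mathbb{Z}$, ``valid functions vanish at integers'' does not apply at $b$, and the symmetry condition $\pi(x)+\pi(b-x)=1$ evaluated at $x=0$ forces $\pi(b)=1$. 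With your value $\pi(b)=0$ you would get $\phi_\lambda(1)=b/(b-\lambda)\neq 1$, and the identity $\phi_\lambda(x)+\phi_\lambda(1-x)=1$ would not close. What actually makes it close is $\pi(bx)+\pi(b(1-x))=\pi(bx)+\pi(b-bx)=1$, i.e., the symmetry of $\pi$ applied at the point $bx$, not vanishing at $b$.

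Second, in part (ii) the perturbation $\psi$ is not ``globally affine'' and cannot be made so: $\phi_\lambda$ itself has two distinct slopes, so writing $\phi_\lambda=\tfrac12(\phi^1+\phi^2)$ only gives, via the Interval Lemma, that $\psi$ is affine on each covered component separately, with a priori unrelated slopes on different components. The missing structural input is that the additive faces of the complex $\Delta\P$ for $\phi_\lambda$ are exactly the $1/b$-scalings of those of $\pi$ (because $\pi(x)+\pi(y)=\pi(x+y)$ if and only if $\phi_\lambda(x/b)+\phi_\lambda(y/b)=\phi_\lambda((x+y)/b)$), so the Gomory--Johnson 2-slope theorem transfers: all intervals are covered and there are exactly two covered components. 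Only then does the argument finish: $\phi^1=\phi^2=0$ on the flat piece near $0$ (both are nonnegative there and sum to $0$), hence $\psi\equiv 0$ on that entire component, and $\phi^i(1)=1$ forces the single remaining slope on the other component to agree for $i=1,2$. Your reading of the $b>3$ hypothesis --- enough whole squares $[i/b,(i+1)/b]\times[0,1/b]$ must fit inside the triangle $\{(x,y): x,y,x+y\in[0,1]\}$ to cover $[0,\tfrac12]$, with the reflection symmetry of covered components handling $[\tfrac12,1]$ --- is the right one and matches the paper.
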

\begin{proof} 
\ReviewOnlyText{See detailed proof in Appendix \ref{s:proof}.} As a minimal
valid Gomory--Johnson function, $\pi$ is $\mathbb{Z}$-periodic, $\pi(0)=0$,
$\pi$ is subadditive and $\pi(x)+\pi(b-x)=1$ for all $x\in \mathbb{R}$ (see
\cite{igp_survey}). It is not hard to check $\phi_\lambda(0)=0$,
$\phi_\lambda$ is superadditive and $\phi_\lambda(x)+\phi_\lambda(1-x)=0$ for
all $x\in \mathbb{R}$. If $\lambda$ is small enough, there exists an
$\epsilon>0$ such that $\phi_\lambda(x)\ge 0$ for all $x\in(0,
\epsilon)$. Therefore, $\phi_\lambda$ is a maximal general DFF and
$\phi_\lambda|_{[0,1]}$ is a maximal classical DFF, using the characterization
of maximality in \cite{alves-clautiaux-valerio-rietz-2016:dual-feasible-book}. 

\smallbreak

\noindent\emph{Part (i)}. 
Suppose $\pi$ has slope $s$ on the interval $(a_i, a_{i+1})$, then by calculation $\phi_\lambda(x)$ has slope $s'=\frac{b(1-\lambda s)}{b-\lambda}$ on the interval $(\frac{a_i}{b},\frac{a_{i+1}}{b})$. From the fact we can conclude $\pi$ has $k$ different slopes if and only if $\phi_\lambda$ has $k$ different slopes.  Since $\pi$ is $\mathbb{Z}$-periodic, $\phi_\lambda$ is quasiperiodic with period $\frac{1}{b}$. If $b>1$, the interval $[0,1]$ contains a whole period, so $\pi$ has $k$ different slopes if and only if $\phi_\lambda|_{[0,1]}$ has $k$ different slopes.



\smallbreak

\noindent\emph{Part (ii)}. 
If $sb>1$ and $\lambda=\frac{1}{s}$, then $\phi_\lambda$ is also continuous piecewise linear with only 2-slope values, and  $\phi_\lambda(x)=0$ for $x\in[0,\frac{\epsilon}{b}]$. Suppose $(x,y,x+y)$ is an additive vertex, i.e., $\pi(x)+\pi(y)=\pi(x+y)$. Then $(\frac{x}{b},\frac{y}{b},\frac{x+y}{b})$ is an additive vertex. The additive faces of a
certain polyhedral complex $\Delta\P$ of $\phi_\lambda$, defined in analogy to the 
Gomory--Johnson case in \cite{hong-koeppe-zhou:software-abstract-with-link-to-software},  are just a scaling of those for $\pi$ \ReviewOnlyText{(see Appendix \ref{s:definition})}. The Gomory--Johnson 2-Slope Theorem for $\pi$ in \cite{infinite} guarantees that there are only 2 covered components for $\phi_\lambda$. Assume $\phi_\lambda=\frac{\phi_1+\phi_2}{2}$, then $\phi_1$ and $\phi_2$ have slope 0 wherever $\phi_\lambda$ has slope 0. From the above facts we can conclude $\phi_1=\phi_2$. Thus, $\phi_\lambda$ is extreme.

We assume $b>3$. If all intervals are covered for the restriction $\phi_\lambda|_{[0,1]}$, then we can use the same arguments to show $\phi_\lambda|_{[0,1]}$ is extreme. So we only need to show all intervals are covered by additive faces in the region: $R= \{(x,y)\colon x,y,x+y\in[0,1]\}$. Maximality of $\phi_\lambda|_{[0,1]}$ implies that if $(x,y,x+y)$ is an additive vertex, so is $(1-x-y,y,1-x)$. The fact implies that the covered components are symmetric about $x=\frac{1}{2}$, i.e., $x$ is covered $\Leftrightarrow$ $1-x$ is covered. From the scaling of additive faces of $\pi$, the additive faces of $\phi_\lambda|_{[0,1]}$ contained in the square $[0,\frac{1}{b}]^2$ cover the interval $[0,\frac{1}{b}]$. Similarly, we can use additive faces contained in $\ceil{\frac{b}{2}}$ such whole squares to cover the interval $[0,\frac{1}{2}]$. $b>3$ guarantees that those $\ceil{\frac{b}{2}}$ whole squares are contained in the region~$R$. Together with the symmetry of covered components, we can conclude all intervals are covered, thus  $\phi_\lambda|_{[0,1]}$ is extreme.
\end{proof}



\section{Computer-based search}
\label{s:com}

One of our goals is to use the computer to verify whether a given piecewise
linear function $\phi$ is a classical maximal or extreme DFF. Our technique is
analogous to that in
\cite{hong-koeppe-zhou:software-abstract-with-link-to-software}. The code
\sage{maximality\_test($\phi$)} implements a fully automatic test whether
$\phi$ is maximal, by checking the characterization of maximality for
classical DFFs given in
\cite{alves-clautiaux-valerio-rietz-2016:dual-feasible-book}.  The key
technique in the extremality test is to analyze the additivity relations in
$\Delta\P$. The foundation of the technique is that all superadditivity
conditions that are tight (satisfied with equality) for $\phi$ are also tight
for an effective perturbation $\tilde{\phi} = \phi_1-\phi = \phi - \phi_2$. We investigate the
additivity relations from additive faces of $\Delta\P$ and apply the Interval
Lemma \cite{igp_survey} and other techniques from
\cite{hong-koeppe-zhou:software-abstract-with-link-to-software} to derive
necessary properties of $\tilde{\phi}$. If $\tilde{\phi}$ is forced to be
zero, then $\phi$ is proven to be extreme \ReviewOnlyText{(see Appendix
  \ref{s:extremality})}.

We transfer the
computer-based search technique in \cite{koeppe-zhou:extreme-search} for
Gomory--Johnson functions to DFFs. Our goal is to find piecewise linear
extreme classical DFFs with rational breakpoints, which have fixed common
denominator $q\in\mathbb{N}$. The strategy is to discretize the interval
$[0,1]$ and define discrete functions on $\frac{1}{q}\mathbb{Z}\cap
[0,1]$. After adding the inequalities from characterization of maximality in
\cite{alves-clautiaux-valerio-rietz-2016:dual-feasible-book}, the space of
functions becomes a convex polytope with finite dimensions. Extreme points of
the polytope can be found by vertex enumeration tools.
Recent advances in polyhedral computation (Normaliz, version 3.2.0) allow us
to reach $q=31$ in under a minute of CPU time. 
Candidates for extreme DFFs $\phi$ are obtained by interpolating values on
$\frac{1}{q}\mathbb{Z}\cap [0,1]$ from each extreme point (discrete
function). 
Then we use our extremality test to filter out the non-extreme 
DFFs.  For example, for $q=31$, among 91761 functions
interpolated from extreme points, there are 1208 extreme DFFs, most of which
do not belong to known families.\ReviewOnlyText{{} Details can be found in
  Appendix \ref{s:search}.}

We observe most of continuous extreme DFFs are 2-slope functions by
computer-based search. In contrast to the Gomory--Johnson 2-slope Theorem
\cite{infinite}, not all 2-slope maximal classical DFFs are extreme.  Using
our computer-based search for $q=28$, we find a continuous 2-slope extreme DFF
with 3 ``covered components''
\cite{hong-koeppe-zhou:software-abstract-with-link-to-software}. Consequently
the technique for proving Gomory--Johnson 2-slope Theorem no longer works in
the DFF setting.

\clearpage
\providecommand\ISBN{ISBN }
\bibliographystyle{../../../amsabbrvurl}
\bibliography{../../../bib/MLFCB_bib}

\clearpage
\appendix

\section{Literature review on Dual-Feasible Functions}

\label{s:preliminaries}
\begin{definition}[{\cite[Definition 2.1]{alves-clautiaux-valerio-rietz-2016:dual-feasible-book}}]
A function $\phi\colon [0,1] \to [0,1]$ is called a (valid) classical Dual-Feasible Function, if for any finite index set $I$ of nonnegative real numbers $x_i \in [0,1]$, it holds that,

$$\sum_{i\in I}x_i \le 1 \Rightarrow \sum_{i\in I}\phi(x_i) \le 1 $$

\end{definition}

In order to apply classical DFFs, all variables should stay in $[0,1]$, which is not always convenient. Generalization of DFF is necessary for certain types of problem, like vector packing problems (see section 3.5 in \cite{alves-clautiaux-valerio-rietz-2016:dual-feasible-book}).

\begin{definition}[{\cite[Definition 3.1]{alves-clautiaux-valerio-rietz-2016:dual-feasible-book}}]
A function $\phi\colon \mathbb{R} \to\mathbb{R}$ is called a (valid) general Dual-Feasible Function, if for any finite index set $I$ of real numbers $x_i \in \mathbb{R}$, it holds that,

$$\sum_{i\in I}x_i \le 1 \Rightarrow \sum_{i\in I}\phi(x_i) \le 1 $$

\end{definition}

Lueker \cite{Lueker:1983:BPI:1382437.1382833} used the classical DFFs for the first time to derive lower bounds to bin packing problems. Suppose there are in total $n$ items with weight $x_i$, and each $x_i$ is drawn uniformly from the interval $[a,b]$, where $0<a<b<1$. We want to pack all items into a minimum number of bins so that no bins have weight exceeding $1$.  Define the optimum packing ratio to be the limit, as $n\to \infty$, of the ratio of
the expected value of the number of bins used to pack
$n$ items drawn uniformly from $[a, b]$ to the expected total
size of these items. Then $E[\phi(X)]/E[X]$ is the lower bound for the optimum packing ratio, where $\phi$ is a classical DFF and $X$ is the random variable uniformly distributed in $[a,b]$. 

Vanderbeck \cite{vanderbeck2000exact} proposed a parametric family of ``discrete" DFF which could be used to generate a valid inequality which is equivalent or dominates the Chv\'atal-Gomory Cut. A function $\phi\colon \{0,1,\dots, d\} \to \{0,1,\dots, d'\}$ with $d, d' \in \mathbb{Z}_+$ is said to be a \emph{discrete DFF}, if $\sum_{i\in I}x_i \le d \Rightarrow \sum_{i\in I}\phi(x_i) \le \phi(d)=d'$ for any finite index set $I$ of nonnegative integer numbers. Any discrete DFFs can be converted into classical DFFs by generating discontinuous step functions (see Section 2.1 in \cite{alves-clautiaux-valerio-rietz-2016:dual-feasible-book}). DFFs generalize the
well-known property of the floor function 
that underlies the 
Chv\'atal-Gomory Cut.

In the monograph \cite{alves-clautiaux-valerio-rietz-2016:dual-feasible-book},  the authors explored maximality of both classical and general DFFs.  

\begin{theorem}[{\cite[Theorem 2.1]{alves-clautiaux-valerio-rietz-2016:dual-feasible-book}}]
\label{thm:maximality-classical}
A function $\phi\colon [0,1] \to [0,1]$ is a classical maximal DFF if and only if the following conditions hold:
\begin{enumerate}[(i)]
  \item[(i)] 
  $\phi$ is superadditive.
        \item[(ii)] 
    $\phi$ is symmetric in the sense $\phi(x)+\phi(1-x)=1$ 
  \item[(iii)] 
    $\phi(0)=0$
  \end{enumerate}
\end{theorem}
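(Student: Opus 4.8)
The plan is to prove the two implications of the ``if and only if'' directly from the definition of a valid classical DFF and the notion of pointwise domination; no machinery beyond this is needed.

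For the direction ($\Leftarrow$), assume $\phi$ satisfies (i)--(iii). First I would verify that $\phi$ is a valid DFF: for a finite family $\{x_i\}_{i\in I}\subseteq[0,1]$ with $\sum_{i\in I}x_i\le 1$, an induction on $|I|$ using superadditivity gives $\sum_{i\in I}\phi(x_i)\le\phi\big(\sum_{i\in I}x_i\big)$, and the right-hand side is at most $1$ because $\phi$ takes values in $[0,1]$. For maximality, suppose $\psi$ is a valid DFF with $\psi\ge\phi$ pointwise; applying validity of $\psi$ to the two-element family $\{x,1-x\}$ (whose entries sum to exactly $1$) gives $\psi(x)+\psi(1-x)\le 1=\phi(x)+\phi(1-x)\le\psi(x)+\psi(1-x)$, forcing equality throughout and hence $\psi(x)=\phi(x)$. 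Since $x$ is arbitrary, $\psi=\phi$.

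For the direction ($\Rightarrow$), assume $\phi$ is a maximal valid DFF. Condition (iii) is immediate: applying validity to $n$ copies of $0$ gives $n\,\phi(0)\le 1$ for all $n$, so $\phi(0)=0$. For superadditivity (i) I would pass to the ``superadditive closure'' $\psi(x)=\sup\{\sum_{i\in I}\phi(x_i):x_i\in[0,1],\ \sum_{i\in I}x_i=x\}$, check that it is again a valid DFF into $[0,1]$ (assemble near-optimal decompositions of finitely many points into a single family and invoke validity of $\phi$), and note $\psi\ge\phi$; maximality then forces $\psi=\phi$, and evaluating $\psi$ on the decomposition $\{a,b\}$ of $a+b$ yields $\phi(a)+\phi(b)\le\phi(a+b)$ whenever $a+b\le 1$. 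Combined with (iii) this also shows $\phi$ is nondecreasing.

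The \emph{crux} is the symmetry (ii), and this is where I expect the only real obstacle. The idea: for $x>0$, if $\phi(x)$ were strictly less than the finite quantity $\mu(x):=\min\{\,\frac{1-\phi(1-kx)}{k}:k\in\mathbb{Z}_{>0},\ kx\le 1\,\}$, then redefining $\phi$ only at the point $x$, to the value $\mu(x)$, would still give a valid DFF --- in any family the changed contributions come only from the copies of $x$, and the remaining items sum to at most $1-mx$ (with $m$ the number of such copies), so superadditivity and monotonicity bound the total by $m\cdot\mu(x)+\phi(1-mx)\le 1$. This contradicts maximality, so $\phi(x)=\mu(x)$ for every $x>0$; in particular $\phi(1)=1-\phi(0)=1$. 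Now apply the identity $\phi(y)=\mu(y)$ at $y=1-x$ for $x\in(0,\tfrac12)$: since $2(1-x)>1$, the only admissible multiplicity in $\mu(1-x)$ is $k=1$, so $\phi(1-x)=1-\phi(x)$. The case $x\in(\tfrac12,1)$ follows by interchanging the roles of $x$ and $1-x$, the case $x=\tfrac12$ is a one-line check that $\mu(\tfrac12)=\min\{1-\phi(\tfrac12),\tfrac12\}$ forces $\phi(\tfrac12)=\tfrac12$, and $x\in\{0,1\}$ is covered by (iii) together with $\phi(1)=1$. Everything else is routine bookkeeping with the DFF inequality; the subtle points are recognizing the single-point perturbation by $\mu$ and the range analysis that collapses $\mu(1-x)$ to its $k=1$ term.
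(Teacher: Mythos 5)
The paper states this result only as a citation to \cite[Theorem 2.1]{alves-clautiaux-valerio-rietz-2016:dual-feasible-book} and gives no proof of its own, so there is nothing to compare line by line; judged on its own terms, your argument is correct and complete. The ($\Leftarrow$) direction and conditions (iii) and (i) of ($\Rightarrow$) are handled exactly as one would expect (the superadditive-closure trick for (i) is sound: $\psi$ is valid, bounded by $1$ via validity of $\phi$ on the assembled decompositions, dominates $\phi$, hence equals it). Your treatment of symmetry via $\mu(x)=\min\{\frac{1}{k}(1-\phi(1-kx)):k\in\mathbb{Z}_{>0},\,kx\le 1\}$, the single-point bump to $\mu(x)$, and the observation that $\mu(1-x)$ collapses to its $k=1$ term when $1-x>\frac12$ all check out; the only hygiene points (that $\mu(x)\le 1$ so the perturbed function still maps into $[0,1]$, and that the multiplicity $m$ of $x$ in a feasible family automatically satisfies $mx\le 1$ so $m$ is an admissible $k$) are implicitly covered by your bounds. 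It is worth noting that this is essentially the same device the present paper \emph{does} use, in Appendix B, to characterize maximal \emph{general} DFFs: there condition (iv) is precisely $\phi(r)=\inf_k\{\frac1k(1-\phi(1-kr))\}$, established by the same one-point-perturbation contradiction. Your proof can be read as that argument specialized to the domain $[0,1]$ together with the extra step of extracting the symmetry identity from the collapse of the min at points above $\frac12$ --- a cleaner statement is available in the classical case exactly because the $k\ge 2$ terms become vacuous there.
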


As for the maximality of the general DFF, so far there is no characterization for that. However, there are sufficient conditions and necessary conditions explained in \cite{alves-clautiaux-valerio-rietz-2016:dual-feasible-book}.

\begin{theorem}[{\cite[Theorem 3.1]{alves-clautiaux-valerio-rietz-2016:dual-feasible-book}}]
\label{thm:maximality-general}
Let $\phi\colon \mathbb{R} \to\mathbb{R}$ be a given function. If $\phi$ satisfies the following conditions, then $\phi$ is a maximal DFF:
\begin{enumerate}[(i)]
  \item[(i)] 
  $\phi$ is superadditive.
        \item[(ii)] 
    $\phi$ is symmetric in the sense $\phi(x)+\phi(1-x)=1$ 
  \item[(iii)] 
    $\phi(0)=0$
    \item[(iv)]
    There exists an $\epsilon>0$ such that $\phi(x)\ge 0$ for all $x\in(0, \epsilon)$
  \end{enumerate}
On the other hand, if $\phi$ is a maximal general DFF, then $\phi$ satisfies conditions $(i)$, $(ii)$ and $(iv)$.
\end{theorem}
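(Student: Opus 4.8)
The plan is to prove the two implications separately. For the \emph{sufficiency} direction I would first upgrade (iii) and (iv) to full nonnegativity: for any $t>0$, writing $t$ as a sum of $n$ equal pieces $t/n\in(0,\epsilon)$ and iterating superadditivity (i) gives $\phi(t)\ge n\,\phi(t/n)\ge 0$, so with (iii) one has $\phi\ge 0$ on $[0,\infty)$. Validity then follows quickly: for a finite family with $\sum_i x_i=s\le 1$, superadditivity yields $\sum_i\phi(x_i)\le\phi(s)$, and (ii) rewrites $\phi(s)=1-\phi(1-s)$; since $1-s\ge 0$ we have $\phi(1-s)\ge 0$, hence $\phi(s)\le 1$ and $\sum_i\phi(x_i)\le 1$. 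Maximality is even more direct and uses only validity and (ii): if a valid DFF $\psi$ dominates $\phi$ pointwise, then the defining inequality of $\psi$ applied to the pair $\{x,1-x\}$ gives $\psi(x)+\psi(1-x)\le 1$, while $\psi\ge\phi$ and (ii) give $\psi(x)+\psi(1-x)\ge\phi(x)+\phi(1-x)=1$; the two nonnegative gaps $\psi(x)-\phi(x)$ and $\psi(1-x)-\phi(1-x)$ therefore sum to zero and both vanish, so $\psi=\phi$.

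For the \emph{necessity} direction I would extract the local conditions by single-point increase arguments, each showing that if a value of $\phi$ can be raised while preserving validity, then $\phi$ was not maximal. \textbf{Superadditivity:} if $\phi(a)+\phi(b)>\phi(a+b)$, let $\psi$ equal $\phi$ except $\psi(a+b)=\phi(a)+\phi(b)$; in any admissible family, replacing each item equal to $a+b$ by the two items $a,b$ preserves the sum, so validity of $\phi$ bounds the $\phi$-value of the split family, which is exactly $\sum\psi$; thus $\psi$ is valid and strictly dominates $\phi$, a contradiction. \textbf{$\phi(0)=0$:} first $\phi(0)\le 0$ (many copies of $0$); if $\phi(0)<0$, raising it to $0$ stays valid since deleting zero items leaves the sum unchanged. \textbf{Condition (iv):} if $\phi(x)<0$ for some $x>0$, raising $\phi(x)$ to $0$ is valid because in any admissible family with $m$ copies of $x$ the remaining items sum to at most $1-mx\le 1$ and so are bounded by validity, while the $m$ raised copies now contribute $0$. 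This already gives $\phi\ge 0$ on all of $(0,\infty)$, hence (iv), and with superadditivity it gives that $\phi$ is nondecreasing.

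The symmetry (ii) is the substantive part. Validity applied to $\{x,1-x\}$ always gives $\phi(x)+\phi(1-x)\le 1$, so I must rule out strict inequality. The key device is a \emph{gap argument}: if a finite admissible family $F$ containing $x_0$ is tight, i.e. $\sum_F\phi=1$, let $w=1-\sum_F x_i\ge 0$; adjoining the item $w$ keeps the total at $1$ and forces $\phi(w)\le 0$, hence $\phi(w)=0$ by the nonnegativity just proved. Removing one copy of $x_0$ and merging the rest by superadditivity (which holds for arbitrary real arguments) then yields $1-\phi(x_0)=\sum_{F\setminus\{x_0\}}\phi+\phi(w)\le\phi(1-x_0)$, i.e. $\phi(x_0)+\phi(1-x_0)\ge 1$, giving equality. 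Thus symmetry at $x_0$ fails only if no admissible family through $x_0$ is tight; but then the slack of such families should be bounded away from $0$, one may raise $\phi(x_0)$ by a small positive amount while preserving validity, and maximality is contradicted.

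The step I expect to be the main obstacle is this last one: passing from \emph{no exactly tight family through $x_0$} to a \emph{uniform positive slack} that licenses the increase. In the general-DFF setting a single value may legitimately be repeated arbitrarily often, compensated by negative arguments, so the number $m$ of copies of $x_0$ in an admissible family is unbounded and one cannot simply take an infimum over finitely many configurations. The plan to overcome this is to use monotonicity and superadditivity to tame the repetitions: merging the $m$ copies of $x_0$ into the single argument $mx_0$ only increases the partial $\phi$-sum, and an analogous merging of the remaining items reduces every admissible family to a single-copy configuration of the type handled by the gap argument, so that the relevant infimum of $s_F/m$ is governed by the symmetry defect $1-\phi(x_0)-\phi(1-x_0)$ and is strictly positive. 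Carrying out this reduction carefully, including the bookkeeping for negative items (on which $\phi\le 0$), is where the real work lies.
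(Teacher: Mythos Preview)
The paper does not supply its own proof of this theorem; it is quoted from the monograph \cite[Theorem~3.1]{alves-clautiaux-valerio-rietz-2016:dual-feasible-book} and used only as background, so there is no in-paper argument to compare your attempt against.  (The only related argument the paper develops is the alternative characterization in Appendix~\ref{s:Yildiz-Conrnuejols} via $\phi(r)=\inf_{k\in\mathbb Z_+}\tfrac1k\bigl(1-\phi(1-kr)\bigr)$, and that proof \emph{invokes} the present theorem rather than reproving it.)

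On the substance of your proposal: the sufficiency direction is correct, and so are your necessity arguments for superadditivity and for nonnegativity on $(0,\infty)$ (hence (iv)).  The obstacle you flag for (ii) is genuine, and the sketched fix does not remove it.  Your merging idea, carried out, gives for any admissible family $F$ containing $m\ge 1$ copies of $x_0$ the bound
\[
s_F \;\ge\; 1-m\phi(x_0)-\phi(1-mx_0)\;\ge\;1-\phi(x_0)-\phi(1-x_0)\;=\;\delta,
\]
using $\phi(1-x_0)\ge\phi(1-mx_0)+(m-1)\phi(x_0)$ from superadditivity.  But this is a bound on $s_F$, not on $s_F/m$; since raising $\phi(x_0)$ by $\varepsilon$ increases $\sum\psi$ by $m\varepsilon$, you would need $s_F\ge m\varepsilon$ for all $m$, which $s_F\ge\delta$ does not provide.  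Concretely, the family $F_m=\{x_0,\dots,x_0,\,1-mx_0\}$ with $m$ copies of $x_0$ can have slack as small as $\delta$ while $m\to\infty$, so no single-point increase at $x_0$ is justified by your reduction, and the assertion that ``the relevant infimum of $s_F/m$ is \dots\ strictly positive'' is not supported.  A proof of (ii) along these lines has to perturb $\phi$ at more than one point (or proceed by a different device, such as the $\inf_k$ formula the paper proves); the one-point perturbation is too local for the general-DFF setting where $m$ is unbounded.
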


Different approaches to construct non-trivial classical DFFs from ``simple" functions are explained in \cite{alves-clautiaux-valerio-rietz-2016:dual-feasible-book}, including convex combination and function composition.

\begin{proposition}[{\cite[Section 2.3.1] {alves-clautiaux-valerio-rietz-2016:dual-feasible-book}}]
If $\phi_1$ and $\phi_2$ are two classical maximal DFFs, then $\alpha\phi_1 +(1-\alpha)\phi_2$ is also a maximal DFF, for $0<\alpha<1$.
\end{proposition}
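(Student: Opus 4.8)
The plan is to reduce everything to the characterization of classical maximal DFFs in \autoref{thm:maximality-classical}, rather than arguing from the defining implication directly. Write $\phi := \alpha\phi_1 + (1-\alpha)\phi_2$ with $0<\alpha<1$. First I would note that $\phi$ is a well-defined map $[0,1]\to[0,1]$: for every $x\in[0,1]$ the value $\phi(x)$ is a convex combination of $\phi_1(x),\phi_2(x)\in[0,1]$, hence lies in $[0,1]$. Then I would verify conditions (i)--(iii) of \autoref{thm:maximality-classical} one at a time. Condition (iii) is immediate since $\phi(0)=\alpha\phi_1(0)+(1-\alpha)\phi_2(0)=0$. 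Condition (ii) follows by taking the $\alpha$- and $(1-\alpha)$-weighted sum of the two symmetry identities: $\phi(x)+\phi(1-x)=\alpha\bigl(\phi_1(x)+\phi_1(1-x)\bigr)+(1-\alpha)\bigl(\phi_2(x)+\phi_2(1-x)\bigr)=\alpha+(1-\alpha)=1$. Condition (i) holds because superadditivity is preserved under nonnegative linear combinations: for $x,y,x+y\in[0,1]$, multiplying each inequality $\phi_i(x)+\phi_i(y)\le\phi_i(x+y)$ by the nonnegative weight ($\alpha$ or $1-\alpha$) and summing gives $\phi(x)+\phi(y)\le\phi(x+y)$.

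Having established (i)--(iii) together with the codomain constraint, \autoref{thm:maximality-classical} immediately yields that $\phi$ is a classical maximal DFF, which completes the proof.

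I do not expect a genuine obstacle here: each condition in the characterization is either linear (symmetry, the value at the origin) or a linear inequality (superadditivity) in $\phi$, and the codomain $[0,1]$ is convex, so all of them are manifestly stable under convex combinations. The one methodological remark worth making is that the argument should go through \autoref{thm:maximality-classical} rather than the definition of maximality: plain validity of $\phi$ would follow directly from $\sum_{i\in I}\phi(x_i)=\alpha\sum_{i\in I}\phi_1(x_i)+(1-\alpha)\sum_{i\in I}\phi_2(x_i)\le\alpha+(1-\alpha)=1$, but pointwise non-dominatedness is not obviously inherited from $\phi_1$ and $\phi_2$ by such an elementary manipulation, whereas the characterization makes it transparent.
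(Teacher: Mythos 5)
Your proof is correct: since Theorem~\ref{thm:maximality-classical} characterizes classical maximal DFFs exactly by the codomain $[0,1]$, superadditivity, symmetry, and $\phi(0)=0$, and each of these is manifestly preserved under convex combination, the conclusion follows immediately, and your closing remark rightly identifies why one should argue through the characterization rather than through pointwise non-dominatedness directly. The paper itself states this proposition as a cited result from the monograph without supplying a proof, and your argument is the standard one.
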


\begin{proposition}[{\cite[Proposition 2.3] {alves-clautiaux-valerio-rietz-2016:dual-feasible-book}}]
If $\phi_1$ and $\phi_2$ are two classical maximal DFFs, then the composed function $\phi_1(\phi_2(x))$ is also a maximal DFF.
\end{proposition}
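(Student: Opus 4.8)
The plan is to verify the three conditions of \autoref{thm:maximality-classical} for the composition $\phi := \phi_1(\phi_2(\cdot))$, namely superadditivity, symmetry, and $\phi(0)=0$. Two preliminary observations carry most of the weight. First, since by definition every classical DFF maps $[0,1]$ into $[0,1]$, both $\phi_1$ and $\phi_2$ are nonnegative. Second, this nonnegativity together with superadditivity forces each $\phi_i$ to be nondecreasing: for $a \le b$ in $[0,1]$ we write $b = a + (b-a)$ with all three arguments in $[0,1]$, whence superadditivity gives $\phi_i(b) \ge \phi_i(a) + \phi_i(b-a) \ge \phi_i(a)$. The condition $\phi(0)=0$ is then immediate, since $\phi_1(\phi_2(0)) = \phi_1(0) = 0$ by condition (iii) applied to $\phi_2$ and then to $\phi_1$.

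For superadditivity, fix $x,y$ with $x,y,x+y \in [0,1]$, and chain three inequalities. Superadditivity of $\phi_2$ gives $\phi_2(x+y) \ge \phi_2(x)+\phi_2(y)$; before applying $\phi_1$ I must check the right-hand side lies in $[0,1]$, which holds because $\phi_2(x)+\phi_2(y) \le \phi_2(x+y) \le 1$ and both terms are nonnegative. Monotonicity of $\phi_1$ then yields $\phi_1(\phi_2(x+y)) \ge \phi_1(\phi_2(x)+\phi_2(y))$, and superadditivity of $\phi_1$ applied to the arguments $\phi_2(x),\phi_2(y)$, which lie in $[0,1]$ with sum in $[0,1]$, gives $\phi_1(\phi_2(x)+\phi_2(y)) \ge \phi_1(\phi_2(x))+\phi_1(\phi_2(y))$. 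Composing the three steps is exactly the superadditivity of $\phi$.

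Symmetry follows by applying the two symmetry relations in turn. Condition (ii) for $\phi_2$ gives $\phi_2(1-x) = 1-\phi_2(x)$, so $\phi(1-x) = \phi_1(1-\phi_2(x))$; then condition (ii) for $\phi_1$ at the point $\phi_2(x) \in [0,1]$ gives $\phi_1(\phi_2(x)) + \phi_1(1-\phi_2(x)) = 1$, which is precisely $\phi(x)+\phi(1-x)=1$.

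I expect the only genuine subtlety to be the domain bookkeeping in the superadditivity step: both the monotonicity and the superadditivity of $\phi_1$ are available only on $[0,1]$, so the argument relies crucially on the bound $\phi_2(x)+\phi_2(y) \le 1$ derived from the superadditivity and the codomain of $\phi_2$. Everything else is a direct substitution into \autoref{thm:maximality-classical}.
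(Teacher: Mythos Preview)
Your proof is correct. The paper itself does not give a proof of this proposition; it simply quotes it from the monograph \cite{alves-clautiaux-valerio-rietz-2016:dual-feasible-book}, so there is nothing to compare against. Your argument is the natural one: verify the three conditions of \autoref{thm:maximality-classical} directly, using the auxiliary fact that a maximal classical DFF is nondecreasing (which you correctly derive from superadditivity plus nonnegativity) to handle the composition in the superadditivity step. The domain bookkeeping you flag---that $\phi_2(x)+\phi_2(y)\le\phi_2(x+y)\le 1$ so that $\phi_1$ may legitimately be applied---is indeed the only point requiring care, and you handle it properly. One cosmetic remark: strictly speaking \autoref{thm:maximality-classical} is stated for functions $\phi\colon[0,1]\to[0,1]$, so you might add a sentence noting that the composition has codomain $[0,1]$ because each $\phi_i$ does; this is trivial but completes the verification of the hypotheses.
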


Maximal general DFFs can also be obtained by extending a maximal classical DFF to the domain $\mathbb{R}$.

\begin{theorem}[{\cite[Proposition 3.10] {alves-clautiaux-valerio-rietz-2016:dual-feasible-book}}]
Let $\phi$ be a maximal classical DFF, then there exists $b_0\ge1$ such that for all $b>b_0$ the following function $\hat{\phi}(x)$ is a maximal general DFF.
\begin{equation*}
            \hat{\phi}(x) =
                \begin{cases}
                    b\times \floor{x}+\phi(\rm{frac}(x))   &   \text{if $x\le1$ } \\
                    1-\hat{\phi}(1-x) & \text {if $x>1$}
                \end{cases}
        \end{equation*}
\end{theorem}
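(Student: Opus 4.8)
The plan is to verify the four sufficient conditions of Theorem~\ref{thm:maximality-general} for $\hat\phi$ to be a maximal general DFF; conditions (ii)--(iv) are essentially immediate and superadditivity is the only place the threshold $b_0$ enters. From Theorem~\ref{thm:maximality-classical} I use that $\phi(0)=0$, $\phi(x)+\phi(1-x)=1$ (so $\phi(1)=1$), $\phi$ is superadditive, and $0\le\phi\le1$ because $\phi$ maps into $[0,1]$. The first thing to record is that off the integers $\hat\phi$ is the staircase $\hat\phi(x)=b\floor x+\phi(\mathrm{frac}(x))$: for non-integer $x<0$ this is the first branch verbatim, and for non-integer $x>1$ it follows by unwinding the reflection branch once and using $\phi(t)+\phi(1-t)=1$; equivalently $\hat\phi(x+1)=\hat\phi(x)+b$ away from $\mathbb{Z}$. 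At integer arguments the reflection branch forces $\hat\phi|_{[0,1]}=\phi$ (in particular $\hat\phi(1)=1$, not $b$), $\hat\phi(n)=1+(n-1)b$ for $n\ge1$, and $\hat\phi(n)=nb$ for $n\le0$.

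Granting this, (iii) is $\hat\phi(0)=\phi(0)=0$; (iv) holds with any $0<\epsilon\le1$ since $\hat\phi=\phi\ge0$ on $(0,1)$; and (ii), $\hat\phi(x)+\hat\phi(1-x)=1$, holds for $x>1$ (equivalently $x<0$) by the very form of the reflection branch and for $x\in[0,1]$ by the symmetry of $\phi$. For superadditivity, take $x=\floor x+\xi$, $y=\floor y+\eta$ with $\xi,\eta\in[0,1)$, assume first that none of $x,y,x+y$ is a positive integer, substitute $\hat\phi(x)=b\floor x+\phi(\xi)$, $\hat\phi(y)=b\floor y+\phi(\eta)$, and cancel $b(\floor x+\floor y)$; then $\hat\phi(x)+\hat\phi(y)\le\hat\phi(x+y)$ splits into two cases. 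If $\xi+\eta<1$ it is $\phi(\xi)+\phi(\eta)\le\phi(\xi+\eta)$, i.e.\ superadditivity of $\phi$. If $\xi+\eta\ge1$ it is $\phi(\xi)+\phi(\eta)\le b+\phi(\xi+\eta-1)$, whose left side is at most $2$ by the codomain bound and whose right side is at least $b$; so it holds whenever $b\ge2$. Hence $b_0=2$ works (and is essentially tight: for the $\phi$ that jumps from $0$ to $1$ at $\tfrac12$, the inequality at $x=y=0.6$ forces $b\ge2$), with the sharper $b_0=1+\sup_{t\in[0,1)}\phi(t)$ also valid since $\phi(\xi)+\phi(\eta)=1+\phi(\xi)-\phi(1-\eta)\le1+\sup_{[0,1)}\phi$.

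The one genuinely case-heavy point---and what I expect to be the main obstacle---is superadditivity when some of $x,y,x+y$ is a positive integer, where $\hat\phi$ leaves the clean staircase formula; it is benign, though. The binding instance is $\xi+\eta=1$ (so $x+y\in\mathbb{Z}$): there the reduced inequality is $b(\floor x+\floor y)+\phi(\xi)+\phi(1-\xi)\le\hat\phi(x+y)$, i.e.\ $b(\floor x+\floor y)+1\le\hat\phi(x+y)$, which is an equality when $\floor x+\floor y+1\ge1$ and reduces to $1\le b$ otherwise. When $x$ and/or $y$ themselves lie in $\mathbb{Z}$, one substitutes $\hat\phi(n)=1+(n-1)b$ ($n\ge1$) or $\hat\phi(n)=nb$ ($n\le0$) and checks the combinations directly; each holds---with equality in several---as soon as $b\ge1$, essentially because the only departure from the staircase is the ``$+1$'' forced at positive integers and the relation $\phi(\xi)+\phi(1-\xi)=1$ is exactly what absorbs it. Assembling the cases, $\hat\phi$ satisfies (i)--(iv) for every $b\ge2$, so it is a maximal general DFF, which proves the statement with $b_0=2$.
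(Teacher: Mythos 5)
Your proof is correct, but note that the paper itself offers no proof of this statement to compare against: it is quoted verbatim from the monograph (Proposition~3.10 there) in the literature-review appendix as a cited result. Your argument is the natural one and is complete: verify conditions (i)--(iv) of \autoref{thm:maximality-general}, with the staircase identity $\hat\phi(x)=b\floor{x}+\phi(\mathrm{frac}(x))$ off the integers (obtained by unwinding the reflection branch with $\phi(t)+\phi(1-t)=1$) reducing superadditivity to superadditivity of $\phi$ when the fractional parts do not wrap, and to $\phi(\xi)+\phi(\eta)\le b+\phi(\xi+\eta-1)$ when they do, which your codomain bound settles for $b\ge 2$; the integer boundary cases and the symmetry check all go through. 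Two remarks. First, you correctly flag that the case split as printed ("$x\le 1$" versus "$x>1$") literally yields $\hat\phi(1)=b\cdot 1+\phi(0)=b$, which would violate both validity and the symmetry condition; your reading $\hat\phi(1)=1$ (equivalently, splitting at $x<1$ versus $x\ge1$) is the only one under which the statement is true, and it is worth saying explicitly that this is a correction to the transcribed definition rather than a consequence of it. Second, your proof is strictly more informative than the statement: it exhibits the explicit threshold $b_0=2$ (sharper, $b_0=1+\sup_{[0,1)}\phi$) together with a tightness example, whereas the statement only asserts existence of some $b_0\ge1$.
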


\begin{theorem}[{\cite[Proposition 3.12] {alves-clautiaux-valerio-rietz-2016:dual-feasible-book}}]
Let $\phi$ be a maximal classical DFF, then there exists $b\ge1$ such that the following function $\hat{\phi}(x)$ is a maximal general DFF.
\begin{equation*}
            \hat{\phi}(x) =
                \begin{cases}
                    bx+1-b   &   \text{if $x<0$ } \\
                    bx & \text {if $x>1$}\\
                    \phi(x) & \text {if $0\le x\le1$}
                \end{cases}
        \end{equation*}

\end{theorem}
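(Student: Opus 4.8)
The plan is to verify, for $\hat\phi$, the four sufficient conditions of \autoref{thm:maximality-general}: superadditivity, the symmetry $\hat\phi(x)+\hat\phi(1-x)=1$, $\hat\phi(0)=0$, and nonnegativity on a right-neighborhood of $0$. Three of these are nearly immediate. We have $\hat\phi(0)=\phi(0)=0$; since a classical DFF is $[0,1]$-valued, $\hat\phi=\phi\ge 0$ on $(0,1)$, so condition (iv) holds with $\epsilon=1$; and for the symmetry one splits according to whether $x$ lies in $[0,1]$, in $(-\infty,0)$, or in $(1,\infty)$ --- on $[0,1]$ it is the symmetry of $\phi$, and in the other two cases the affine pieces were chosen exactly so that $(bx+1-b)+b(1-x)=1$. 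So all the content is in superadditivity together with the choice of the threshold $b_0$.

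The one estimate that does the work is that every maximal classical DFF satisfies $\phi(y)\le 2y$ for all $y\in(0,1]$. Iterating superadditivity gives $\phi(my)\ge m\,\phi(y)$ whenever $my\le 1$; taking $m=\lfloor 1/y\rfloor$ and using $\phi\le 1$ yields $\phi(y)\le 1/\lfloor 1/y\rfloor\le 2y$. Feeding this into the symmetry relation $\phi(y)=1-\phi(1-y)$ gives, for every $b\ge 2$, the two-sided bound $1-b(1-y)\le\phi(y)\le by$ on $[0,1]$. I would then take $b_0=2$; the estimate also explains why one cannot always take $b=1$, since a DFF whose slope somewhere exceeds $1$ violates $\phi(y)\le by$.

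For superadditivity it is convenient to subtract the outer trend: put $G(t):=\hat\phi(t)-bt$, so that $\hat\phi$ is superadditive if and only if $G$ is. For $b\ge 2$, $G$ equals the constant $1-b\le 0$ on $(-\infty,0)$, equals $\phi(t)-bt\in[1-b,0]$ on $[0,1]$ by the bound above, and equals the constant $0$ on $(1,\infty)$; in particular $G\le 0$ everywhere and $G\equiv 0$ on $(1,\infty)$. I would then verify $G(x+y)\ge G(x)+G(y)$ by a case split on where $x+y$ lies: if $x+y>1$ then $G(x+y)=0\ge G(x)+G(y)$ because $G\le 0$; if $x+y<0$ then some variable is negative and each sub-case (the other variable negative, in $[0,1]$, or $>1$) collapses either to $G\le 0$ or to the identity $G(x)+G(y)=1-b=G(x+y)$; and if $x+y\in[0,1]$ then either both variables lie in $[0,1]$, where the inequality is precisely superadditivity of $\phi$ after cancelling $b(x+y)$, or one of them --- say $x$ --- is negative, in which case the inequality rearranges to $\phi(x+y)-\phi(y)\ge 1-b(1-x)$ (if $y\in[0,1]$) or to $G(x+y)\ge 1-b$ (if $y>1$); both hold because $1-x>1$ pushes the right-hand side below $1-b$, while the left-hand side is controlled by the two-sided estimate on $G|_{[0,1]}$.

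The main obstacle is making that case split genuinely exhaustive and checking that the single bound $\phi(y)\le 2y$ suffices to close each branch --- in particular the mixed branches with $x<0\le y$ and $x+y\in[0,1]$, where the slope-$b$ outer pieces and the inner function $\phi$ interact most tightly. Everything else is routine bookkeeping, and no continuity or piecewise linearity of $\phi$ is needed.
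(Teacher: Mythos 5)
Your argument is correct. Note that the paper itself gives no proof of this statement --- it is quoted in the literature-review appendix as Proposition~3.12 of the cited monograph --- so there is no in-paper argument to compare against; your proof stands on its own. The one nontrivial ingredient, the bound $\phi(y)\le 1/\lfloor 1/y\rfloor\le 2y$ obtained by iterating superadditivity $\lfloor 1/y\rfloor$ times and using $\phi\le 1$, is valid (for $t\ge 1$ one has $\lfloor t\rfloor\ge t/2$, so the second inequality holds on all of $(0,1]$), and together with symmetry it gives exactly the two-sided estimate $1-b\le G\le 0$ on $[0,1]$ for $b\ge 2$ that makes every branch of the case analysis close: the branches with $x+y>1$ and $x+y<0$ use only the sign of $G$ and $G(x)=1-b$ on the negative ray, the branch with $x,y,x+y\in[0,1]$ is superadditivity of $\phi$, and the mixed branches reduce to $G(x)+G(y)\le (1-b)+0\le G(x+y)$. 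The remaining conditions of the sufficient criterion (Theorem~\ref{thm:maximality-general}) are checked exactly as you say, and you are right that no continuity or piecewise linearity of $\phi$ is used. Taking $b\ge 2$ of course establishes the stated existence of an admissible $b\ge 1$.
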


DFFs can be used to generate valid inequalities for IP problems.

\begin{theorem}[{\cite[Proposition 5.1] {alves-clautiaux-valerio-rietz-2016:dual-feasible-book}}]
If $\phi$ is a maximal general DFF and $S=\{x\in\mathbb{Z}_+^n: \sum_{j=1}^{n}a_{ij}x_j\le b_j, i=1,2,\dots,m\}$. Then for any $i$, $\sum_{j=1}^{n}\phi(a_{ij})x_j\le \phi(b_j)$ is a valid inequality.
\end{theorem}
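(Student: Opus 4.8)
The plan is to derive the valid inequality directly from two structural properties of $\phi$ — superadditivity and monotonicity — both of which are available once $\phi$ is known to be a maximal general DFF. Throughout I fix a single constraint index $i$ and an arbitrary feasible point $x\in S$, so that $\sum_{j=1}^n a_{ij}x_j\le b_i$ with each $x_j\in\mathbb{Z}_+$; the goal is to prove $\sum_{j=1}^n\phi(a_{ij})x_j\le\phi(b_i)$.

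First I would record the properties of $\phi$ that the proof uses. By \autoref{thm:maximality-general}, a maximal general DFF is superadditive and admits some $\epsilon>0$ with $\phi(t)\ge 0$ for all $t\in(0,\epsilon)$. From these I would upgrade the local nonnegativity to global nonnegativity on the positive reals: given $t>0$, choose $k\in\mathbb{N}$ with $t/k<\epsilon$ and apply superadditivity to the decomposition $t=\underbrace{(t/k)+\dots+(t/k)}_{k}$, obtaining $\phi(t)\ge k\,\phi(t/k)\ge 0$. Monotonicity is then immediate: for $u>v$ superadditivity gives $\phi(u)\ge\phi(v)+\phi(u-v)\ge\phi(v)$, since $u-v>0$ forces $\phi(u-v)\ge 0$; hence $\phi$ is nondecreasing.

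With monotonicity in hand the inequality is a short calculation. For each $j$ with $x_j\ge 1$, repeated superadditivity gives $\phi(x_j a_{ij})\ge x_j\,\phi(a_{ij})$; aggregating over $j$ (the indices with $x_j=0$ contribute nothing to either side) yields
\[
\sum_{j=1}^n \phi(a_{ij})\,x_j\;\le\;\sum_{j:\,x_j\ge 1}\phi(x_j a_{ij})\;\le\;\phi\!\left(\sum_{j=1}^n a_{ij}x_j\right),
\]
where the last step is superadditivity applied to the decomposition of $\sum_j a_{ij}x_j$ into the pieces $x_j a_{ij}$. Finally, $\sum_j a_{ij}x_j\le b_i$ together with monotonicity gives $\phi\!\left(\sum_j a_{ij}x_j\right)\le\phi(b_i)$; combining with the display proves $\sum_j\phi(a_{ij})x_j\le\phi(b_i)$. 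Since $x\in S$ was arbitrary, the inequality is valid for $S$.

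The load-bearing step, and the one I expect to need the most care, is the passage from the local nonnegativity condition~(iv) of \autoref{thm:maximality-general} to global monotonicity; everything afterward is a mechanical application of superadditivity. It is worth noting that the argument never uses the defining implication $\sum_{i\in I}x_i\le 1\Rightarrow\sum_{i\in I}\phi(x_i)\le 1$ directly — it relies only on the structural consequences of maximality recorded in \autoref{thm:maximality-general} — which is precisely why the hypothesis that $\phi$ is \emph{maximal}, and not merely valid, is what makes the proof go through.
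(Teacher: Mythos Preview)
The paper does not supply its own proof of this statement: it appears in the literature-review appendix as a citation of \cite[Proposition~5.1]{alves-clautiaux-valerio-rietz-2016:dual-feasible-book} with no accompanying argument, so there is nothing to compare against.

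Your proof is correct. The key chain --- superadditivity and local nonnegativity near~$0$ from \autoref{thm:maximality-general}, bootstrapped to $\phi\ge 0$ on $(0,\infty)$ and hence to monotonicity, followed by the two-stage estimate $\sum_j x_j\phi(a_{ij})\le \phi(\sum_j a_{ij}x_j)\le \phi(b_i)$ --- is exactly the standard route. Two very minor remarks: (1) the edge case $x=0$ (all $x_j=0$) bypasses the superadditivity step, but then feasibility forces $b_i\ge 0$ and your nonnegativity result gives $0\le\phi(b_i)$ directly; (2) your closing comment slightly overstates the role of maximality --- what you actually use is superadditivity and nonnegativity on the positive axis, which are consequences of maximality via \autoref{thm:maximality-general}, not maximality per se.
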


\clearpage
\section{Relation to Y{\i}ld{\i}z--Cornu\'ejols cut-generating functions}
\label{s:Yildiz-Conrnuejols}

In the paper by Y\i{}ld\i{}z and Cornu\'ejols \cite{yildiz2016cut}, the authors consider the following generalization of the Gomory--Johnson model:

\begin{equation}
\label{e:1}
x=f+\sum_{r\in\mathbb{R}}r\, y(r)
\end{equation}
$$x\in S,\, f\notin S$$ $$y:\mathbb{R} \to \mathbb{Z}_+, \,\text{and $y$ has finite support.}$$
where $S$ can be any nonempty subset of $\mathbb{R}$.
A function $\pi\colon \mathbb{R}\to \mathbb{R}$ is a valid cut-generating function if the inequality $\sum_{r\in\mathbb{R}}\pi(r)\, y(r)\ge 1$ holds for all feasible solutions $(x,y)$ to (\ref{e:1}).

\begin{theorem}
Given a valid general DFF $\phi$, then the following function is a valid cut-generating function to the model (\ref{e:1}) where $S=\{1+f\}$:
$$\pi_\lambda(x)=\frac{x-(1-\lambda)\,\phi(x)}{\lambda}, \quad 0< \lambda < 1$$
\end{theorem}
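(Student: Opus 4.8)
The plan is to unwind the definition of a valid cut-generating function for the model~(\ref{e:1}) with $S=\{1+f\}$ and reduce the required inequality to a single instance of the defining property of the general DFF~$\phi$.

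First I would identify the feasible solutions. Substituting $x=1+f$ into~(\ref{e:1}) and cancelling $f$, a pair $(x,y)$ with $x\in S=\{1+f\}$ is feasible exactly when $\sum_{r\in\mathbb{R}}r\,y(r)=1$; the requirement $f\notin S$ is automatic since $f\ne 1+f$. Hence it suffices to prove $\sum_{r}\pi_\lambda(r)\,y(r)\ge 1$ for every $y\colon\mathbb{R}\to\mathbb{Z}_+$ of finite support with $\sum_{r}r\,y(r)=1$.

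Next I would substitute the formula for $\pi_\lambda$ and use linearity together with the normalization $\sum_r r\,y(r)=1$:
\[
  \sum_{r}\pi_\lambda(r)\,y(r)=\frac{1}{\lambda}\Bigl(\sum_{r}r\,y(r)-(1-\lambda)\sum_{r}\phi(r)\,y(r)\Bigr)=\frac{1}{\lambda}\Bigl(1-(1-\lambda)\sum_{r}\phi(r)\,y(r)\Bigr).
\]
Since $0<\lambda<1$, multiplying by $\lambda$ and then dividing by the positive number $1-\lambda$ shows that $\sum_{r}\pi_\lambda(r)\,y(r)\ge 1$ is equivalent to $\sum_{r}\phi(r)\,y(r)\le 1$. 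The last step is to exhibit the family on which to invoke validity of $\phi$: because $y$ has finite support and integer values, form the finite multiset $\{x_i\}_{i\in I}$ containing $y(r)$ copies of each $r$ in the support of $y$. Then $\sum_{i\in I}x_i=\sum_{r}r\,y(r)=1\le 1$, so the general DFF property yields $\sum_{r}\phi(r)\,y(r)=\sum_{i\in I}\phi(x_i)\le 1$, which is precisely what was needed.

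There is no deep obstacle in this argument; the only points requiring care are correctly reading off feasibility for $S=\{1+f\}$ as the normalization $\sum_r r\,y(r)=1$, and tracking the signs of $\lambda$ and $1-\lambda$ when moving between the cut-generating inequality and the DFF inequality (the degenerate case $y\equiv 0$ does not occur, since it violates the normalization).
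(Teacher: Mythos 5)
Your proposal is correct and follows essentially the same route as the paper's own proof: substitute the formula for $\pi_\lambda$, use the normalization $\sum_r r\,y(r)=1$ to reduce the cut-generating inequality to $\sum_r\phi(r)\,y(r)\le 1$, and invoke the general DFF property. Your additional care with the sign of $1-\lambda$, the explicit multiset construction, and the exclusion of $y\equiv 0$ only makes the argument more complete.
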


\begin{proof}
We want to show that $\pi_\lambda$ is a a valid cut-generating function to the model (\ref{e:1}) where $S=\{1+f\}$. Suppose there is a function $y:\mathbb{R} \to \mathbb{Z}_+, \,\text{$y$ has finite support}$, and $\sum_{r\in\mathbb{R}}r\, y(r)= 1$. We want to show that:
\begin{align*}
 & \sum_{r\in\mathbb{R}} \pi_\lambda(r)\, y(r)\ge 1\quad \text{holds for $\lambda\in (0,1)$}\\
 \Leftrightarrow &   \sum_{r\in\mathbb{R}} \frac{r-(1-\lambda)\,\phi(r)}{\lambda}\, y(r)\ge 1 \\
  \Leftrightarrow &   \sum_{r\in\mathbb{R}} (r-(1-\lambda)\,\phi(r))\, y(r)\ge \lambda \\
   \Leftrightarrow &   \sum_{r\in\mathbb{R}} r\, y(r) - (1-\lambda) \sum_{r\in\mathbb{R}}\phi(r)\,y(r)\ge \lambda \\
   \Leftrightarrow & \sum_{r\in\mathbb{R}}\phi(r)\,y(r)\le 1
 \end{align*}
 The last step is derived from $\sum_{r\in\mathbb{R}}r\, y(r)= 1$ and $\phi$ is a general DFF.
\end{proof}

On the other hand, given a valid cut-generating function $\pi$ to the model (\ref{e:1}) with $S=\{1+f\}$, the function $\phi(x)=\frac{x-\lambda\, \pi(x)}{1-\lambda}$ is not necessarily a general DFF. 

\begin{example}
It is not hard to show the following function is a valid function to (\ref{e:1}) with $S=\{1+f\}$.
\begin{equation*}
            \pi(x) =
                \begin{cases}
                    5x  &   \text{if $x\ge0$ } \\
                    x & \text {if $x<0$ and $x\neq -1$} \\
                    -4 &\text {if $x=-1$}
                \end{cases}
        \end{equation*} 
 Let $\lambda=\frac{1}{2}$, and $\phi(x)=\frac{x-\lambda\, \pi(x)}{1-\lambda}$. Then the following function $\phi$ is not a general DFF, since $\phi(-1)=2>1$.
 \begin{equation*}
            \phi(x) =
                \begin{cases}
                    -3x  &   \text{if $x\ge0$ } \\
                    x & \text {if $x<0$ and $x\neq -1$} \\
                    2 &\text {if $x=-1$}
                \end{cases}
        \end{equation*} 

\end{example}

Inspired by the characterization of minimal 
cut-generating functions in the Y{\i}ld{\i}z--Cornu\'ejols model in \cite{yildiz2016cut}, we find the characterization
of maximal general DFFs missing in \cite{alves-clautiaux-valerio-rietz-2016:dual-feasible-book}.

\begin{theorem}
A function $\phi\colon \mathbb{R}\to\mathbb{R}$ is a maximal general DFF if and only if the following conditions hold:
 \begin{enumerate}[(i)]
  \item[(i)] 
  $\phi(0)=0$
      \item[(ii)] 
   $\phi$ is superadditive
   \item[(iii)] 
   $\phi(x)\ge 0$ for all $x\in \mathbb{R}_+$
   \item[(iv)]
   $\phi(r)=\inf_{k}\{\frac{1}{k}(1-\phi(1-kr)): k\in \mathbb{Z}_+\}$
    \end{enumerate}

\end{theorem}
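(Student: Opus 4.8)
The plan is to prove the equivalence directly, in the spirit of the Y{\i}ld{\i}z--Cornu\'ejols characterization of minimal cut-generating functions (one could alternatively transport that characterization through the correspondence $\phi\leftrightarrow\pi_\lambda$ of the preceding theorem, but the self-contained route is cleaner and lets us reuse the sufficient and necessary conditions already recorded in Theorem~\ref{thm:maximality-general}).

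For the ``if'' direction, assume (i)--(iv). First I would establish validity: for a finite family $\{x_i\}_{i\in I}$ with $s:=\sum_i x_i\le 1$, superadditivity (ii) gives $\sum_i\phi(x_i)\le\phi(s)$ by a one-line induction on $|I|$ (the empty case uses $\phi(0)=0$); then (iv) with $k=1$ gives $\phi(s)\le 1-\phi(1-s)$, and since $1-s\ge 0$, condition (iii) gives $\phi(1-s)\ge 0$, so $\sum_i\phi(x_i)\le 1$. For maximality, suppose a valid general DFF $\psi$ satisfies $\psi\ge\phi$ pointwise. Applying validity of $\psi$ to the family consisting of $k$ copies of $r$ together with one copy of $1-kr$ (whose entries sum to $1$) gives $k\psi(r)+\psi(1-kr)\le 1$, hence $\psi(r)\le\frac1k(1-\psi(1-kr))\le\frac1k(1-\phi(1-kr))$ for every $k\in\mathbb{Z}_+$; taking the infimum and invoking (iv) yields $\psi(r)\le\phi(r)$, so $\psi=\phi$ and $\phi$ is maximal.

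For the ``only if'' direction, let $\phi$ be a maximal general DFF. Theorem~\ref{thm:maximality-general} already supplies superadditivity (which is (ii)), symmetry, and the existence of $\epsilon>0$ with $\phi\ge 0$ on $(0,\epsilon)$. To get (i): superadditivity forces $\phi(0)\le 0$, and if $\phi(0)<0$, then resetting the value at $0$ to $0$ produces a valid general DFF strictly dominating $\phi$ --- any finite family may be stripped of its zeros without changing either the sum of its arguments or the sum of its $\phi$-values --- contradicting maximality; so $\phi(0)=0$. Condition (iii) then follows because for $x>0$ one picks $n$ with $0<x/n<\epsilon$ and uses $\phi(x)\ge n\phi(x/n)\ge 0$. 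For (iv), the inequality $\phi(r)\le\frac1k(1-\phi(1-kr))$ for every $k$ is immediate from validity applied to the canonical family above. For the reverse inequality I would argue by contradiction: if $\phi(r_0)+\epsilon_0\le\frac1k(1-\phi(1-kr_0))$ for all $k$ and some $\epsilon_0>0$, let $\psi$ agree with $\phi$ except that $\psi(r_0)=\phi(r_0)+\epsilon_0$. Given any finite family containing $r_0$ with multiplicity $m\ge 1$ and other entries $z_j$ with $w:=\sum_j z_j\le 1-mr_0$, superadditivity gives $\sum_j\phi(z_j)\le\phi(w)$, and since $1-mr_0-w\ge 0$, superadditivity together with (iii) gives $\phi(w)\le\phi(1-mr_0)$; combining with the displayed strict inequality at $k=m$ yields $m\psi(r_0)+\sum_j\psi(z_j)\le m\phi(r_0)+m\epsilon_0+\phi(1-mr_0)\le 1$, so $\psi$ is valid and strictly dominates $\phi$, a contradiction.

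The step I expect to be the main obstacle is the reverse inequality in (iv): an arbitrary finite family must first be collapsed to the canonical ``$k$ copies of $r_0$ plus $1-kr_0$'' shape using superadditivity and the sign condition (iii), and one must carry a uniform slack $\epsilon_0$ because the infimum in (iv) need not be attained. It is also worth recording at the outset that this infimum is finite --- by symmetry (from Theorem~\ref{thm:maximality-general}) one has $\frac1k(1-\phi(1-kr))=\frac1k\phi(kr)\ge\phi(r)$ by superadditivity --- so the characterization is not vacuous.
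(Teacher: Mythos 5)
Your proof is correct and follows essentially the same route as the paper's: the same canonical families ($k$ copies of $r$ plus one copy of $1-kr$) give the ``$\le$'' half of (iv) and the maximality argument, and the same single-point perturbation (bumping the value at $r_0$ and using superadditivity plus monotonicity on $\mathbb{R}_+$ to collapse the rest of the family) gives the ``$\ge$'' half. You are in fact slightly more careful than the paper in the only-if direction, where you explicitly derive $\phi(0)=0$ and $\phi\ge 0$ on $\mathbb{R}_+$ rather than attributing them to the cited sufficient/necessary-conditions theorem, which does not literally contain them.
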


\begin{proof}
Suppose $\phi$ is a maximal general DFF, then conditions $(i),(ii),(iii)$ hold by \autoref{thm:maximality-general}. For any $r\in\mathbb{R}$ and $k\in \mathbb{Z}_+$, $kr+(1-kr)=1\Rightarrow k\phi(r)+\phi(1-kr)\le 1$. So $\phi(r)\le \frac{1}{k}(1-\phi(1-kr))$ for any positive integer $k$, then $\phi(r)\le\inf_{k}\{\frac{1}{k}(1-\phi(1-kr)): k\in \mathbb{Z}_+\}$. 

If there exists $r_0$ such that $\phi(r_0)<\inf_{k}\{\frac{1}{k}(1-\phi(1-kr_0)): k\in \mathbb{Z}_+\}$, then define a function $\phi_1$ which takes value $\inf_{k}\{\frac{1}{k}(1-\phi(1-kr_0)): k\in \mathbb{Z}_+\}$ at $r_0$ and $\phi(r)$ if $r\neq r_0$. We claim that $\phi_1$ is a general DFF which dominates $\phi$. Given $y:\mathbb{R} \to \mathbb{Z}_+, \,\text{and $y$ has finite support}$ satisfying $\sum_{r\in\mathbb{R}}r\,y(r)\le 1$.  $\sum_{r\in\mathbb{R}}\phi_1(r)\,y(r)=\phi_1(r_0)\,y(r_0)+\sum_{r\neq r_0} \phi(r)\,y(r)$. If $y(r_0)=0$, then it is clear that $\sum_{r\in\mathbb{R}}\phi_1(r)\,y(r)\le 1$. Let $y(r_0)\in\mathbb{Z}_+$, then $\phi_1(r_0)\le \frac{1}{y(r_0)}(1-\phi(1-y(r_0)\,r_0))$ by definition of $\phi_1$, then 
\begin{equation}
\phi_1(r_0)\, y(r_0)+\phi(1-y(r_0)\,r_0)\le 1
\end{equation}
From the superadditive condition and increasing property, we get 
\begin{equation}
\sum_{r\neq r_0} \phi(r)\,y(r)\le \phi(\sum_{r\neq r_0} r\,y(r)) \le \phi(1-y(r_0)\,r_0)
\end{equation}
Combine the two inequalities, then we can conclude that $\phi_1$ is a general DFF and dominates $\phi$, which contradicts the maximality of $\phi$. Therefore, the condition $(iv)$ holds.
\\
\\
Suppose there is a function $\phi\colon \mathbb{R}\to\mathbb{R}$ satisfying all four conditions. Choose $r=1$ and $k=1$, we can get $\phi(1)\le 1$. Together with condition $(i),(ii),(iii)$, it guarantees that $\phi$ is a general DFF. Assume that there is a general DFF $\phi_1$ dominating $\phi$ and there exists $r_0$ such that $\phi_1(r_0)>\phi(r_0)=\inf_{k}\{\frac{1}{k}(1-\phi(1-kr_0)): k\in \mathbb{Z}_+$. So there exists some $k\in\mathbb{Z}_+$ such that 
\begin{align*}
& \phi_1(r_0)>\frac{1}{k}(1-\phi(1-kr_0)) \\
\Leftrightarrow & k\phi_1(r_0)+\phi(1-kr_0)>1 \\
\Rightarrow & k\phi_1(r_0)+\phi_1(1-kr_0)>1
\end{align*}
The last step contradicts the fact that $\phi_1$ is a general DFF, since $kr_0+(1-kr_0)=1$. Therefore, $\phi$ is a maximal general DFF.
\end{proof}




\clearpage

\section{Definition of discontinuous piecewise linear functions and polyhedral
  complexes underlying the algorithmic maximality test of Dual-Feasible Functions}
\label{s:definition}
In this section, we focus on classical DFFs.
We begin with a definition of piecewise linear
functions~$\phi\colon [0,1]\to[0,1]$ that are allowed to be discontinuous, similar
to \cite[section 2.1]{basu-hildebrand-koeppe:equivariant}
and~\cite{igp_survey}. 
Let $0 =a_0 < a_1< \dots < a_{n-1} < a_n = 1$.
Denote by $\B = \{ a_0, a_1, \dots, a_{n-1}, a_n \} $ the set of all
possible \emph{breakpoints}. 
The 0-dimensional faces are defined to be the 
singletons, $\{ a_i \}$, $a_i\in B$,
and the 1-dimensional faces are the closed intervals,
$ [a_i, a_{i+1}]$, $i=0, \dots, {n-1}$. 
Together they form $\P = \P_{\B}$, a finite 
polyhedral complex.
%
We call a function $\phi\colon [0,1] \to \R$ 
\emph{piecewise linear} over $\mathcal{P}_B$ if for each face $I \in
\mathcal{P}_B$, there is an affine linear function $\phi_I \colon \R \to \R$,
$\phi_I(x) = c_I x + b_I$  such that $\phi(x) = \phi_I(x)$ for all $x
\in\relint(I)$. 
Under this definition, piecewise linear functions can be discontinuous.
Let $I = [a_i, a_{i+1}]$. The function $\phi$ can be determined on the open intervals
$\intr(I) = (a_i, a_{i+1})$ by linear
interpolation of the limits $\phi(a_i^+)=\lim_{x\to a_i, x>a_i} \phi(x)
  = \phi_I(a_i)$ and $\phi(a_{i+1}^-)=\lim_{x\to a_{i+1}, x<a_{i+1}} \phi(x) = \phi_I(a_{i+1})$. 
We say the function $\phi$ is continuous piecewise linear over $\mathcal{P}_B$
if it is affine over each of the cells of $\mathcal{P}_B$ (thus automatically
imposing continuity). 

Unlike Gomory--Johnson cut-generating functions, which may be discontinuous at $0$ on both sides, a classical maximal DFF is always continuous at 0 from the right and at 1 from the left. 

\begin{lemma}
Any piecewise linear maximal classical DFF is continuous at $0$ from the right and continuous at $1$ from the left.
\end{lemma}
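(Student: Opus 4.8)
The plan is to establish right-continuity at $0$ first, and then deduce left-continuity at $1$ for free from symmetry. Throughout I would use the maximality characterization \autoref{thm:maximality-classical}: a maximal classical DFF $\phi\colon[0,1]\to[0,1]$ is superadditive, has $\phi(0)=0$, and satisfies $\phi(x)+\phi(1-x)=1$; in particular (taking $x=0$) $\phi(1)=1$, and since the codomain is $[0,1]$ we have $\phi\ge 0$ on all of $[0,1]$.

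For the behavior at $0$ I would invoke the piecewise linear structure. Fix a polyhedral complex $\P_B$ with $0=a_0<a_1<\dots<a_n=1$ over which $\phi$ is piecewise linear. On $\relint([a_0,a_1])=(0,a_1)$ the function coincides with a single affine function $\phi_{[a_0,a_1]}(x)=c\,x+d$, so the one-sided limit $L:=\phi(0^+)=\lim_{x\to 0^+}\phi(x)=d$ exists, and $L\ge 0$ because $\phi\ge 0$. It remains to show $L\le 0$. For every $t$ with $0<t<\tfrac12 a_1$, both $t$ and $2t$ lie in $(0,a_1)$, and superadditivity applied to $2t=t+t$ gives $2\phi(t)\le\phi(2t)$. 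Letting $t\to 0^+$ and using $\phi(t)\to L$ and $\phi(2t)\to L$ yields $2L\le L$, hence $L\le 0$. Therefore $L=0=\phi(0)$, i.e., $\phi$ is continuous at $0$ from the right.

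For continuity at $1$ from the left, I would just use symmetry: for $x\in[0,1]$ we have $\phi(x)=1-\phi(1-x)$; as $x\to 1^-$ the argument $1-x\to 0^+$, so by the previous step $\phi(1-x)\to 0$ and hence $\phi(x)\to 1=\phi(1)$.

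I do not expect a genuine obstacle here. The two points that need care are (a) that the one-sided limit actually exists — this is exactly what piecewise linearity over a fixed breakpoint set provides, and the argument would not survive for an arbitrary superadditive function — and (b) that $\phi$ is nonnegative near $0$, which is where the codomain $[0,1]$ is really used: without $L\ge 0$, the inequality $2L\le L$ only gives $L\le 0$ and would not force $L=0$. Everything else is a two-line limit computation.
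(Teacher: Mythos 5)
Your proof is correct and follows essentially the same route as the paper: superadditivity applied at points $t, t, 2t$ inside the first linear piece forces the intercept (the right limit at $0$) to be $\le 0$, nonnegativity of $\phi$ forces it to be $\ge 0$, and left-continuity at $1$ then follows from the symmetry condition. The only cosmetic difference is that you take a limit as $t\to 0^+$ where the paper just evaluates at $x=y=a_1/3$; the computation is the same.
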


\begin{proof}
Consider $\phi$ to be a piecewise linear maximal classical DFF, and $\phi(x)=sx+b$ on the first open interval $(a_0, a_1)$. Note that the maximality of $\phi$ implies that $\phi(0)=0$. Choose $x=y=\frac{a_1}{3}$. Then based on superadditivity, we have 
$$\phi(x)+\phi(y)\le\phi(x+y)\Rightarrow sx+b+sy+b\le s(x+y)+b\Rightarrow b\le0$$
$b$ is also the right limit at $0$, so $b$ is nonnegative. Therefore, $b=0$, which implies $\phi$ is continuous at $0$ from the right. By symmetry, $\phi$ is continuous at $1$ from the left.
\end{proof}

Similar to \cite{basu-hildebrand-koeppe:equivariant,igp_survey}, we
introduce the function 
$\nabla\phi \colon \R \times \R \to \R$, $\nabla\phi(x,y) =
  \phi(x+y) - \phi(x) - \phi(y)$, which measures the
slack in the superadditivity condition.
The piecewise linearity of $\phi(x)$ 
induces piecewise linearity of $\nabla\phi(x,y)$.  To express the domains of
linearity of $\nabla\phi(x,y)$, and thus domains of additivity and strict
superadditivity, we introduce the two-dimensional polyhedral complex
$\Delta\P = \Delta\P_\B$. 
The faces $F$ of the complex are defined as follows. Let $I, J, K \in
\P_{\B}$, so each of $I, J, K$ is either a breakpoint of $\phi$ or a closed
interval delimited by two consecutive breakpoints. Then 
$ F = F(I,J,K) = \setcond{\,(x,y) \in \R \times \R}{x \in I,\, y \in J,\, x + y \in
  K\,}$. 
The projections  $p_1, p_2, p_3 \colon \R \times \R \to \R$ are defined as 
$p_1(x,y)=x$,  $p_2(x,y)=y$, $p_3(x,y) = x+y$.
Let $F \in \Delta\P$ and let $(u,v) \in F$. Observe that the piecewise linearity of $\phi$ induces piecewise
linearity of $\nabla \phi$, thus $\nabla\phi|_{\relint(F)}$ is affine, we define 
\[\nabla\phi_F(u,v) = \lim_{\substack{(x,y) \to (u,v)\\ (x,y) \in \relint(F)}}
  \nabla\phi(x, y),\]
which allows us to conveniently express limits to boundary points of $F$, in
particular to vertices of $F$, along paths within $\relint(F)$. 
It is clear that $\nabla\phi_F(u,v) $ is affine over $F$, and $\nabla\phi(u,v)=\nabla\phi_F(u,v)$ for all $(u,v) \in \relint(F)$.
We will use $\verts(F)$ to denote the set of vertices of the face~$F$.  

Let $\phi$ be a piecewise linear maximal DFF. We now define the
\emph{additive faces} of the two-dimensional polyhedral complex $\Delta\P$ of
$\phi$. When $\phi$ is continuous, we say that a face $F \in \Delta\P$
is additive if $\nabla\phi =0$ over all $F$. Notice that $\nabla\phi$ is affine
over $F$, the condition is equivalent to $\nabla\phi(u, v) = 0$ for any $(u, v)
\in \verts(F)$. When $\phi$ is discontinuous, following
\cite{hong-koeppe-zhou:software-abstract-with-link-to-software}, we say that a face $F \in
\Delta\P$  is additive if $F$ is contained in a face $F' \in \Delta\P$ such
that $\nabla\phi_{F'}(x,y) =0$ for any $(x,y) \in F$.
Since $\nabla\phi$ is affine in the relative interiors of each face of
$\Delta\P$, the last condition is equivalent to $\nabla\phi_{F'}(u,v) =0$ for
any $(u,v) \in \verts(F)$. 

One of our goals is to use the computer to verify whether a given function, which is assumed to be piecewise linear, is a classical maximal or extreme DFF. In terms of maximality, the two main conditions we need to check are the superadditivity and the symmetry condition. In order to check the superadditivity and the symmetry condition on the whole interval $[0,1]$, we only need to check on all possible breakpoints including the limit cones, which should be a finite set. As for extremality, we use the similar technique in  \cite{basu-hildebrand-koeppe:equivariant,igp_survey} to try to find equivariant perturbation or finite dimensional perturbation.

We introduce an efficient method to check the maximality of a given piecewise linear function using the computer. The code \sage{maximality\_test($\phi$)} implements a fully automatic test whether a given function $\phi$ is maximal, by using the information that is described in additive faces in $\Delta\P$. 

Based on Theorem \ref{thm:maximality-classical}, we need to first check that the range of the function stays in $[0,1]$ and $\phi(0)=0$. Since we assume the function is piecewise linear with finitely many breakpoints, only function values and left/right limits at the breakpoints need to be checked. Similarly, the symmetry condition only needs to checked on the set of breakpoints of $\phi$, namely $\B$, including the left and right limits at each breakpoint. In regards to the superadditivity, it suffices to check $\nabla\phi(u,v)\ge 0$ for any $(u,v)\in \verts(F)$, including the limit values $\nabla \phi_F(u,v)$ when $\phi$ is discontinuous. 

As for the diagrams of $\Delta\P$, we start with a triangle complex $I=J=K=[0,1]$, and then refine $I,J,K$ based on  the set of breakpoints $\B$. In practice, the code \sage{maximality\_test($\phi$)} will show vertices where superadditivity or symmetry condition is violated, and it will paint 2-dimensional additive faces green. It also marks 1-dimensional and 0-dimensional additive faces, which are additive edges and vertices not contained in any higher dimensional additive faces.

\autoref{fig:2d_diagram} is an example of a maximal DFF. We show the diagram of $\Delta\P$ with additive faces painted green, and we also show the function on the upper and left borders. There is no vertex where superadditivity or symmetry condition is violated, so the function is maximal. 

\begin{figure}[tp]
  \centering\includegraphics[width=.7\linewidth]{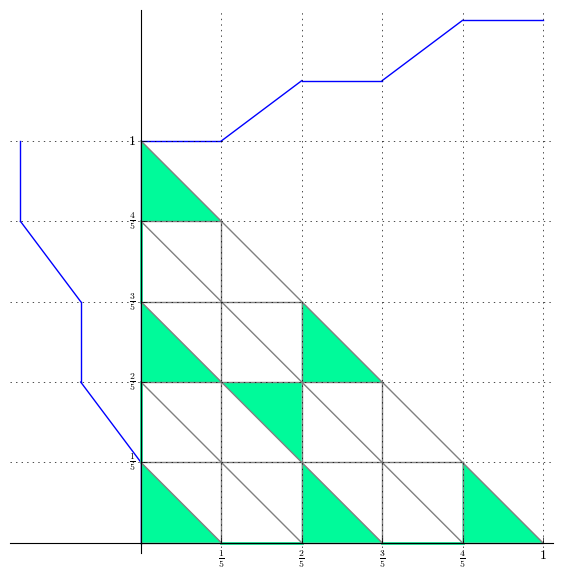}
\caption{Maximal DFF $\phi_{BJ,1}(x;C)=\frac{\floor{Cx}+\max(0,\frac{\{Cx\}-\{C\}}{1-\{C\}})}{\floor{C}}$ for $C=\frac{5}{2}$.}
\label{fig:2d_diagram}
\end{figure}

\clearpage

\section{Detailed proof of Theorem \autoref{thm:main}}
\label{s:proof}

\begin{proof}
We cite two theorems for proving maximality of DFFs. \autoref{thm:maximality-classical} is the characterization of maximal classical DFFs and \autoref{thm:maximality-general} contains sufficient conditions and necessary conditions for maximal general DFFs.

First we prove $\phi_\lambda$ is a maximal general DFF if $\lambda$ is small enough. As a minimal valid Gomory--Johnson function, $\pi$ is $\mathbb{Z}$-periodic, $\pi(0)=0$, $\pi$ is subadditive and $\pi(x)+\pi(b-x)=1$ for all $x\in \mathbb{R}$ \cite{igp_survey}. Note that $\phi_\lambda$ is defined on $\mathbb{R}$, since $\pi$ is $\mathbb{Z}$-periodic and defined on $\mathbb{R}$. It is not hard to check $\phi_\lambda(0)=0$. Since $\phi_\lambda$ is obtained by subtracting a subadditive function from a linear function, it is superadditive.
\begin{align*}
\phi_\lambda(x)+\phi_\lambda(1-x) & = \frac{bx-\lambda\pi(bx)}{b-\lambda}+\frac{b(1-x)-\lambda\pi (b(1-x))}{b-\lambda}
\\& =\frac{b-\lambda (\pi(bx)+\pi(b(1-x)))}{b-\lambda}=1
\end{align*}
The last step is from the symmetry condition of $\pi$ and $\pi(b)=1$.
Since $\pi$ is piecewise linear and continuous at $0$ from the right. Let $s$ be the largest slope of $\pi$, then the largest slope of $\pi(bx)$ is $bs$. Choose $\delta=\frac{1}{s}$, then if $\lambda<\delta$, the slope of $bx$ is always no smaller than the slope of $\lambda \pi(bx)$. There exists an $\epsilon>0$ such that $\phi_\lambda(x)\ge 0$ for all $x\in(0, \epsilon)$. Therefore, $\phi_\lambda$ is a general maximal DFF by \autoref{thm:maximality-general} and $\phi_\lambda|_{[0,1]}$ is a maximal classical DFF by \autoref{thm:maximality-classical}. 

\smallbreak

\noindent\emph{Part (i)}. 
Suppose $\pi$ has slope $s$ on the interval $(a_i, a_{i+1})$, then by calculation $\phi_\lambda(x)=\frac{bx-\lambda\pi(bx)}{b-\lambda}$ has slope $s'=\frac{b(1-\lambda s)}{b-\lambda}$ on the interval $(\frac{a_i}{b},\frac{a_{i+1}}{b})$. So if $\pi$ has slope $s_1$, $s_2$ on interval $(a_i, a_{i+1})$ and $(a_j, a_{j+1})$ respectively, and $\phi_\lambda$ has slope $s_1'$, $s_2'$ on interval $(\frac{a_i}{b},\frac{a_{i+1}}{b})$ and $(\frac{a_j}{b},\frac{a_{j+1}}{b})$ respectively, then $s_1=s_2$ if and only if $s_1'=s_2'$. From the above fact we can conclude $\pi$ has $k$ different slopes if and only if $\phi_\lambda$ has $k$ different slopes.  

Since $\pi$ is $\mathbb{Z}$-periodic, $\phi_\lambda$ is quasiperiodic with period $\frac{1}{b}$. If $b>1$, the interval $[0,1]$ contains a whole period, which has pieces with all different slope values. So $\pi$ has $k$ different slopes if and only if $\phi_\lambda|_{[0,1]}$ has $k$ different slopes.
\smallbreak

\noindent\emph{Part (ii)}. 
%
%
%
%
If $sb>1$ and $\lambda=\frac{1}{s}$, then it is not hard to show $\phi_\lambda$ is also continuous piecewise linear with only 2-slope values, and  $\phi_\lambda(x)=0$ for $x\in[0,\frac{\epsilon}{b}]$, i,e., one slope value is $0$. From the above results, we know $\phi_\lambda$ is a maximal general DFFs. 

We use the idea of extremality test in \autoref{s:extremality}. $\pi$ is extreme from the Gomory--Johnson 2-Slope Theorem \cite{infinite}, therefore all intervals are covered and there are 2 covered components. Suppose $(x,y,x+y)$ is an additive vertex, which means $\pi(x)+\pi(y)=\pi(x+y)$. From arithmetic computation, $(\frac{x}{b},\frac{y}{b},\frac{x+y}{b})$ is an additive vertex, i.e., $\phi_\lambda(\frac{x}{b})+\phi_\lambda(\frac{y}{b})=\phi_\lambda(\frac{x+y}{b})$. So the additive faces for $\phi_\lambda$ are just a scaling of those for $\pi$. In regards to $\phi_\lambda$, all intervals are covered and there are only 2 covered components. $\phi_\lambda(1)=1$ and $\phi_\lambda(x)=0$ for $x\in[0,\frac{\epsilon}{b}]$ guarantee that the interval $[0,1]$ contains the 2 covered components. 

Assume $\phi_\lambda=\frac{\phi_1+\phi_2}{2}$, where $\phi_1$ and $\phi_2$ are maximal general DFFs. By \autoref{thm:maximality-general} and definition, $\phi_1(x)=\phi_2(x)=0$ for $x\in[0,\frac{\epsilon}{b}]$ and  $\phi_1(1)=\phi_2(1)=1$. $\phi_1$ and $\phi_2$ satisfy the additivity where $\phi_\lambda$ satisfies the additivity, otherwise one of $\phi_1$ and $\phi_2$ violates the superadditivity. So the additive faces of $\phi_\lambda$ are still additive faces of $\phi_1$ and $\phi_2$. By Interval Lemma \cite{igp_survey} and values at point $\frac{\epsilon}{b}$ and $1$, we can show $\phi_1$ and $\phi_2$ both have 2 covered components and these covered components are the same as those of $\phi_\lambda$. Thus $\phi_1$ and $\phi_2$ are both continuous 2-slope functions and one slope value is 0, due to nondecreasing condition. Suppose the 2 covered components within $[0,1]$ are $C_1$ and $C_2$, where $C_1$ and $C_2$ are disjoint unions of closed intervals. We assume $\phi_1$ and $\phi_2$ have slope $0$ on $C_1$ and slope $s_1$ and $s_2$ on $C_2$ respectively. $\phi_1(1)=\phi_2(1)=1$ implies that $0\times |C_1|+s_1\times |C_2|=1$ and $0\times |C_1|+s_2\times |C_2|=1$, where $ |C_1|$ and $ |C_2|$ denote the measure of $C_1$ and $C_2$. So we have $s_1=s_2$. All these properties guarantee that $\phi_1$ and $\phi_2$ are equal to each other, therefore $\phi_\lambda$ is extreme. 

We assume $b>3$. If all intervals are covered for the restriction
$\phi_\lambda|_{[0,1]}$, then we can use the same arguments to show
$\phi_\lambda|_{[0,1]}$ is extreme. So we only need to show all intervals are
covered by additive faces in the triangular region: $R= \{(x,y)\colon x,y,x+y\in[0,1]\}$. Maximality of $\phi_\lambda|_{[0,1]}$, especially the symmetry condition, implies that if $(x,y,x+y)$ is an additive vertex, so is $(1-x-y,y,1-x)$. The fact implies that the covered components are symmetric about $x=\frac{1}{2}$, i.e., $x$ is covered $\Leftrightarrow$ $1-x$ is covered and they are in the same covered components. From the scaling of additive faces of $\pi$, the additive faces of $\phi_\lambda|_{[0,1]}$ contained in the square $[0,\frac{1}{b}]^2$ cover the interval $[0,\frac{1}{b}]$, and the additive faces of $\phi_\lambda|_{[0,1]}$ contained in the square $[\frac{1}{b},\frac{2}{b}]\times[0,\frac{1}{b}]$ cover the interval $[\frac{1}{b},\frac{2}{b}]$. Similarly, we can use additive faces contained in $\ceil{\frac{b}{2}}=\ceil{\frac{1}{2}/\frac{1}{b}}$ such whole squares to cover the interval $[0,\frac{1}{2}]$. $b>3$ guarantees that those $\ceil{\frac{b}{2}}$ whole squares are contained in the region $R$. Together with the symmetry of covered components, we can conclude all intervals are covered, thus  $\phi_\lambda|_{[0,1]}$ is extreme.

This concludes the proof of the theorem.
\end{proof}

\clearpage
\section{Extremality test}
\label{s:extremality}

In this section, we explore extremality test for a given function and ways to construct perturbation functions. First there is a simple necessary condition for piecewise linear classical extreme DFFs. 

\begin{lemma}
Let $\phi$ be a piecewise linear classical extreme DFF. If $\phi$ is strictly increasing, then $\phi(x)=x$. In other words, there is no strictly increasing piecewise linear classical extreme DFF except for $\phi(x)=x$.
\end{lemma}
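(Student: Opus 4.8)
The plan is to perturb $\phi$ both towards and away from the identity and to show that strict monotonicity keeps both perturbations inside the class of maximal classical DFFs. Concretely, set $\phi_t := (1-t)\phi + t\,\mathrm{id}_{[0,1]} = \phi + t(\mathrm{id}-\phi)$; the goal is to find $\epsilon>0$ such that $\phi_{-\epsilon}$ and $\phi_{+\epsilon}$ are both maximal classical DFFs. Then $\phi = \tfrac12\phi_{-\epsilon}+\tfrac12\phi_{+\epsilon}$ exhibits $\phi$ as a midpoint of two maximal classical DFFs, so by extremality $\phi_{-\epsilon}=\phi_{+\epsilon}$; since $\phi_{+\epsilon}-\phi_{-\epsilon}=2\epsilon(\mathrm{id}-\phi)$, this forces $\phi(x)=x$.

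First I would unpack the hypotheses. An extreme DFF is maximal, so by \autoref{thm:maximality-classical} the function $\phi$ is superadditive, satisfies $\phi(0)=0$ and $\phi(x)+\phi(1-x)=1$ for all $x$; in particular $\phi(1)=1$, and strict monotonicity gives $0<\phi(x)<1$ for every $x\in(0,1)$. Using the preceding lemma together with piecewise linearity, the first linear piece of $\phi$ is $\phi(x)=s_0x$ and (via the symmetry relation) the last piece is $\phi(x)=1-s_0(1-x)$, with $s_0>0$ because $\phi$ is strictly increasing.

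Next I would check that $\phi_t$ satisfies conditions (i)--(iii) of \autoref{thm:maximality-classical} for every $t<1$. The conditions $\phi_t(0)=0$ and $\phi_t(x)+\phi_t(1-x)=(1-t)\cdot 1+t\cdot 1=1$ are immediate. For superadditivity the key point is that $\nabla(\mathrm{id})\equiv 0$, so by linearity of $\nabla$ one gets $\nabla\phi_t=(1-t)\nabla\phi\ge 0$ whenever $t<1$. The remaining requirement is that $\phi_t$ take values in $[0,1]$, and this is the step where strict monotonicity does the real work: I would show that $c_1:=\inf_{x\in(0,1)}\phi(x)/x$ and $c_2:=\inf_{x\in(0,1)}\bigl(1-\phi(x)\bigr)/(1-x)$ are strictly positive — on the first, resp.\ last, piece these ratios are constantly $s_0$, resp.\ tend to a positive limit, while on the remaining compact part $\phi$ stays strictly between $0$ and $1$, and since $\phi$ has only finitely many breakpoints the infima are attained and positive — and then choose $0<\epsilon<\min(c_1,c_2,1)$, which keeps $\phi_{\pm\epsilon}(x)=\phi(x)\pm\epsilon\,(x-\phi(x))$ inside $[0,1]$ for all $x$. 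Hence $\phi_{-\epsilon}$ and $\phi_{+\epsilon}$ are maximal classical DFFs, and the conclusion follows as above.

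I expect the range verification in the last step to be the only genuine obstacle: for $t<0$ the function $\phi_t$ is not a convex combination of $\phi$ and $\mathrm{id}$, so membership in $[0,1]$ must be argued directly, the delicate points being the ends $x=0$ and $x=1$ where one must rule out $\phi(x)/x$ or $(1-\phi(x))/(1-x)$ degenerating to $0$; this is exactly what the positive one-sided slopes $s_0$ guaranteed by strict monotonicity prevent. Everything else is routine.
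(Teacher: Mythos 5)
Your proof is correct, and it reaches the conclusion by a decomposition that is formally different from the paper's, though both arguments move along the same line in function space, namely the line through $\phi$ and the identity. The paper's proof is a one-shot extraction: with $s$ the (necessarily positive, and smallest) slope of the initial piece, it sets $\phi_1=\frac{\phi(x)-sx}{1-s}$ and writes $\phi = s\cdot\mathrm{id}+(1-s)\phi_1$; the point is that taking out the \emph{full} identity component determined by the initial slope leaves a function that vanishes on $[0,a_1)$, is superadditive, symmetric, and equals $1$ at $1$, so its membership in $[0,1]$ comes for free (it is automatically nondecreasing), and no $\epsilon$ ever appears. Your version perturbs symmetrically by $\pm\epsilon(\mathrm{id}-\phi)$, which trades that soft range argument for the explicit quantitative bounds $c_1=\inf\phi(x)/x$ and $c_2=\inf(1-\phi(x))/(1-x)$; the place where strict monotonicity enters is the same in both proofs (the initial slope $s_0>0$, via right-continuity at $0$, is exactly what keeps $c_1,c_2$ away from $0$, resp.\ what makes the coefficient $s$ positive). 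One small remark on your range step: rather than appealing to infima being ``attained,'' it is cleaner to note that $\phi(x)/x=s_0$ on $(0,a_1)$ and $\phi(x)\ge s_0a_1$ for $x\ge a_1$ by monotonicity, so $c_1\ge s_0a_1>0$ even in the discontinuous case; with that, everything checks out. Your route has the minor advantage of not needing the observation that $s\le 1$ (which the paper uses to split off the case $s=1$), while the paper's is shorter because the residual function $\phi_1$ is visibly a maximal DFF.
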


\begin{proof}
We know $\phi$ is continuous at $0$ from the right. Suppose $\phi(x)=sx$, $x\in[0,a_1)$ and $s>0$. $\phi$ is not strictly increasing if $s=0$. In order to satisfy the superadditivity, $s$ should be the smallest slope value, which implies $s\le1$ since $\phi(1)=1$. Similarly if $s=1$, then $\phi(x)=x$.

Next, we can assume $0<s<1$. Define a function: $$\phi_1(x)=\frac{\phi(x)-sx}{1-s}$$
It is not hard to show $\phi_1(x)=0$ for $x\in [0,a_1)$, and $\phi_1(1)=1$. $\phi_1$ is superadditive because it is obtained by subtracting a linear function from a superadditive function. These two together guarantee that $\phi_1$ stays in the range $[0,1]$.
$$\phi_1(x)+\phi_1(1-x)=\frac{\phi(x)+\phi(1-x)-sx-s(1-x)}{1-s}=1$$
The above equation shows that $\phi_1$ satisfies the symmetry condition. Therefore, $\phi_1$ is also a maximal classical DFF. $\phi(x)=sx+(1-s)\phi_1(x)$ implies $\phi$ is not extreme, since it can be expressed as a convex combination of two different maximal DFFs: $x$ and $\phi_1$.
\end{proof}

Next we give the definition of the effective perturbation function.

\begin{definition}
Let $\phi$ be a maximal classical DFF. Then a function $\tilde{\phi}\colon [0,1] \to \mathbb{R}$ is called an effective perturbation function of $\phi$, if there exists $\epsilon >0$, such that $\phi+\epsilon \tilde{\phi}$ and $\phi-\epsilon \tilde{\phi}$ are both maximal DFFs.
\end{definition}

Effective perturbations of a DFF $\phi$ have a close relation to the functions $\phi$ in regards to continuity and superadditivity.

\begin{lemma}
Let $\phi$ be a piecewise linear maximal classical DFF. If $\phi$ is continuous on a proper interval $I\subseteq [0,1]$, then for any perturbation function $\tilde{\phi}$, we have that $\tilde{\phi}$ is Lipschitz continuous on the interval $I$. Furthermore, $\tilde{\phi}$ is continuous at all points at which $\phi$ is continuous.
\end{lemma}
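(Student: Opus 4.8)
The plan is to use the defining feature of an effective perturbation of $\phi$: by definition there is $\epsilon>0$ such that the functions $\psi^{+}:=\phi+\epsilon\tilde\phi$ and $\psi^{-}:=\phi-\epsilon\tilde\phi$ are both maximal classical DFFs. By \autoref{thm:maximality-classical}, each $\psi^{\pm}$ is superadditive, satisfies $\psi^{\pm}(0)=0$, and --- since a classical DFF maps $[0,1]$ into $[0,1]$ --- is nonnegative; moreover every maximal classical DFF is nondecreasing, because superadditivity together with nonnegativity gives $\psi(x+h)\ge\psi(x)+\psi(h)\ge\psi(x)$. In particular $\phi=\tfrac12(\psi^{+}+\psi^{-})$ is nondecreasing, so on each of its finitely many linearity pieces its slope is $\ge 0$, and $\tilde\phi(0)=\tfrac1{2\epsilon}(\psi^{+}(0)-\psi^{-}(0))=0$.

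The core of the proof is a local estimate for the difference quotient of $\tilde\phi$. Let $J$ be a closed subinterval of $I$ on which $\phi$ is affine, say with slope $c\ge 0$, and pick $x,\,x+h\in J$ with $h>0$. Then $\psi^{\pm}(x+h)-\psi^{\pm}(x)=ch\pm\epsilon\,\delta$, where $\delta:=\tilde\phi(x+h)-\tilde\phi(x)$. Superadditivity of $\psi^{\pm}$ gives $\psi^{\pm}(x+h)-\psi^{\pm}(x)\ge\psi^{\pm}(h)\ge 0$ (valid since $x,h,x+h\in[0,1]$; the last inequality by nonnegativity). From the $\psi^{+}$ inequality, $\epsilon\delta\ge\psi^{+}(h)-ch\ge-ch$; from the $\psi^{-}$ inequality, $-\epsilon\delta\ge\psi^{-}(h)-ch$, i.e.\ $\epsilon\delta\le ch-\psi^{-}(h)\le ch$. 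Hence $|\tilde\phi(x+h)-\tilde\phi(x)|\le\frac{c}{\epsilon}\,h$, using nothing beyond superadditivity, nonnegativity, and the affine behaviour of $\phi$ on $J$.

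To finish, I would chain this bound. Since $\phi$ is continuous on $I$, the restriction $\phi|_{I}$ is continuous piecewise linear; given any $x<x'$ in $I$, subdivide $[x,x']$ at the finitely many breakpoints of $\phi$ lying in it to obtain $x=t_0<t_1<\dots<t_N=x'$ with $\phi$ affine of slope $c_i\ge 0$ on each $[t_{i-1},t_i]\subseteq I$. Summing the local estimates yields $|\tilde\phi(x')-\tilde\phi(x)|\le\frac1{\epsilon}\bigl(\max_i c_i\bigr)(x'-x)\le\frac{M}{\epsilon}(x'-x)$, where $M$ is the largest slope of $\phi$ among its finitely many pieces; this is the claimed Lipschitz continuity on $I$. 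For the second assertion, if $\phi$ is continuous at a point $p$, then because $\phi$ is piecewise linear with finitely many breakpoints it is in fact continuous on a whole relative neighborhood of $p$ in $[0,1]$ --- an open linearity interval if $p$ is not a breakpoint, the union of the two adjacent closed pieces if it is, and a suitable half-open interval at the endpoints $0$ and $1$, using the earlier fact that maximal classical DFFs are continuous from the right at $0$ and from the left at $1$. Applying the first part to that neighborhood shows $\tilde\phi$ is Lipschitz, hence continuous, at $p$ (at $0$ and $1$ one gets one-sided continuity, which together with $\tilde\phi(0)=0$ is what is needed).

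The argument has no serious obstacle; the one point to treat carefully is that superadditivity yields only a one-sided bound on each of $\psi^{+}$ and $\psi^{-}$ separately, and it is only after combining these two one-sided bounds with the fact that $\phi=\tfrac12(\psi^{+}+\psi^{-})$ has a known, fixed slope on each piece that one obtains genuine two-sided control on the difference quotient of $\tilde\phi$. Notably, none of the heavier machinery (the Interval Lemma, or the structure of additive faces of $\Delta\P$) is required here.
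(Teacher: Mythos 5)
Your proof is correct, and it takes a genuinely more elementary route than the paper's. Both arguments run on the same engine: apply superadditivity of $\phi^{\pm}=\phi\pm\epsilon\tilde\phi$ to a pair $(y,h)$ with $y,h,y+h\in[0,1]$, use the exactly known increment of $\phi$ to isolate $\epsilon\bigl(\tilde\phi(y+h)-\tilde\phi(y)\bigr)$, and read off a two-sided bound. The difference is in how the term $\phi^{\pm}(h)$ is controlled. The paper first applies the Interval Lemma to the additive triangle $F([0,x_1],[0,x_1],[0,x_1])$ to conclude that $\tilde\phi$ is linear on the first piece $[0,x_1)$, so that for increments $h<x_1$ one has $\phi^{\pm}(h)=s^{\pm}h$ with $s^{\pm}\ge 0$; it then combines this with a global Lipschitz constant $C$ for $\phi$ on $I$ to bound $\epsilon(\tilde\phi(x)-\tilde\phi(y))$ between $-(C+s^{-})(x-y)$ and $(C+s^{+})(x-y)$, for $x-y<x_1$. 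You instead observe that $\phi^{\pm}(h)\ge 0$ for \emph{every} $h\in[0,1]$ simply because a maximal classical DFF takes values in $[0,1]$ (as you correctly note, this comes from the codomain in the definition, not from superadditivity and $\phi^{\pm}(0)=0$ alone), which removes both the need for the Interval Lemma and the restriction to small increments; you then work piece by piece, where the slope $c\ge 0$ of $\phi$ is exact, and chain across the finitely many breakpoints of $\phi$ in $I$. Your version uses only the maximality characterization of \autoref{thm:maximality-classical}, yields the slightly cleaner constant $\max_i c_i/\epsilon$, and your handling of the second assertion (continuity of $\tilde\phi$ wherever $\phi$ is continuous, via a one- or two-sided linearity neighborhood of the point) fills in a step the paper leaves implicit. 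No gaps.
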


\begin{proof}
We know $\phi$ is continuous at $0$ from the right. Let $\tilde{\phi}$ to be an effective perturbation function. Since $\phi$ is piecewise linear, there exists a nonnegative $s$, such that 
$\phi(x)=sx$ on the first interval $[0, x_1)$. Let $I=J=K=[0,x_1]$, and let $F=F(I,J,K)$. Then for any $x\in I$, $y\in J$, $x+y\in K$, $\nabla\phi_F(x,y)=s(x+y)-sx-sy=0$. Thus, $F$ is a
two-dimensional additive face of $\Delta P$. From the Interval Lemma, we know that there exists $\tilde{s}$, such that $\tilde{\phi}(x)=\tilde{s}x$, when $x\in[0,x_1)$. Since $\tilde{\phi}$ is an effective perturbation function, there exists $\epsilon>0$, such that $\phi^+=\phi+\epsilon\tilde{\phi}$ and $\phi^-=\phi-\epsilon\tilde{\phi}$ are both maximal DFFs. We know that $\phi^+$ and $\phi^-$ have slope $s^+=s+\epsilon\tilde{s}\ge 0$ and $s^-=s-\epsilon\tilde{s}\ge 0$ respectively.

Let $I\subseteq [0,1]$ be a proper interval where $\phi$ is continuous. Since $\phi$ is piecewise linear, there exists a positive constant $C$ such that $|\phi(x)-\phi(y)|\le C|x-y|$, for any $x,y \in I$. We can simply choose $C$ to be the largest absolute values of the slopes of $\phi$. Assume $x\ge y$ and $x-y<x_1$, from the superadditivity of $\phi^+$ and $\phi^-$, $\phi^+(x)\ge \phi^+(y)+\phi^+(x-y)=\phi^+(y)+s^+(x-y)$ and $\phi^-(x)\ge \phi^-(y)+\phi^-(x-y)=\phi^-(y)+s^-(x-y)$. It follows that $-(C+s^-)(x-y)\le \epsilon(\tilde{\phi}(x)-\tilde{\phi}(y))\le (C+s^+)(x-y)$. Therefore, $|\tilde{\phi}(x)-\tilde{\phi}(y)|\le \tilde{C} |x-y|$, where $\tilde{C}=\frac{1}{\epsilon} \max(C+s^-, C+s^+)$. $\tilde{\phi}$ is Lipschitz continuous on the interval $I$.
\end{proof}

\begin{lemma}
Let $\phi$ be a piecewise linear maximal classical DFF. For any effective perturbation function $\tilde{\phi}$, we have that $\tilde{\phi}$ satisfies additivity where $\phi$ satisfies additivity.
\end{lemma}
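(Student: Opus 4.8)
The plan is to deduce additivity of the perturbation $\tilde\phi$ from the fact that the superadditivity slacks of $\phi$ must be matched by those of $\phi^+ = \phi + \epsilon\tilde\phi$ and $\phi^- = \phi - \epsilon\tilde\phi$. Concretely, let $(u,v)$ be a point where $\phi$ is additive, meaning $\nabla\phi(u,v) = \phi(u+v) - \phi(u) - \phi(v) = 0$ (when $\phi$ is discontinuous, the relevant quantity is the limiting value $\nabla\phi_{F'}(u,v)$ along the appropriate face $F'\in\Delta\P$, as set up in Appendix~\ref{s:definition}). Since $\nabla$ is a linear operator in $\phi$, we have $\nabla\phi^{\pm} = \nabla\phi \pm \epsilon\,\nabla\tilde\phi$, so at such a point $\nabla\phi^{\pm}(u,v) = \pm\epsilon\,\nabla\tilde\phi(u,v)$.

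The key step is then the observation that, by definition of an effective perturbation, both $\phi^+$ and $\phi^-$ are maximal classical DFFs, hence by \autoref{thm:maximality-classical} both are superadditive; thus $\nabla\phi^+(u,v) \ge 0$ and $\nabla\phi^-(u,v) \ge 0$ (taking limits along $\relint(F')$ preserves these weak inequalities since $\nabla\phi^{\pm}$ is affine on $\relint(F')$ and superadditivity holds throughout). Combining, $\epsilon\,\nabla\tilde\phi(u,v) \ge 0$ and $-\epsilon\,\nabla\tilde\phi(u,v) \ge 0$, and since $\epsilon > 0$ this forces $\nabla\tilde\phi(u,v) = 0$. That is exactly additivity of $\tilde\phi$ at $(u,v)$. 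First I would state this for points, then note that since the additive faces of $\Delta\P$ are determined by additivity at their vertices (as recalled in Appendix~\ref{s:definition}), $\tilde\phi$ inherits every additive face of $\phi$.

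The main obstacle is purely bookkeeping in the discontinuous case: one must be careful that ``$\phi$ satisfies additivity'' is interpreted via the face-limit operator $\nabla\phi_{F'}$ and that the same face $F'$ works for $\phi$, $\phi^+$, and $\phi^-$ simultaneously. This holds because $\phi^{\pm}$ are piecewise linear over the same polyhedral complex $\P_\B$ as $\phi$ (a perturbation only rescales slopes and intercepts on existing pieces, introducing no new breakpoints), so $\Delta\P$ is common to all three and $\nabla\phi^{\pm}_{F'} = \nabla\phi_{F'} \pm \epsilon\,\nabla\tilde\phi_{F'}$ as affine functions on $F'$. With that identification in place, the argument above applies verbatim to the limiting values at $\verts(F)$, completing the proof.
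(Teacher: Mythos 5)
Your argument is correct and is essentially the paper's own proof: both deduce $\nabla\tilde\phi(u,v)=0$ at additive points of $\phi$ from the superadditivity of $\phi\pm\epsilon\tilde\phi$, which forces $\pm\epsilon\,\nabla\tilde\phi(u,v)\ge 0$. Your additional care with the face-limit operator in the discontinuous case is a reasonable elaboration of what the paper leaves implicit, but the underlying idea is the same.
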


\begin{proof}
Since $\tilde{\phi}$ is an effective perturbation function, there exists $\epsilon>0$, such that $\phi^+=\phi+\epsilon\tilde{\phi}$ and $\phi^-=\phi-\epsilon\tilde{\phi}$ are both maximal DFFs. If $\phi$ satisfies additivity at $(x,y)$, meaning $\phi(x)+\phi(y)=\phi(x+y)$. Applying superadditivity of $\phi^+$ and $\phi^-$ at $(x,y)$, we get $\tilde{\phi}(x)+\tilde{\phi}(y)=\tilde{\phi}(x+y)$.
\end{proof}

Similar to \cite{basu-hildebrand-koeppe:equivariant,igp_survey}, we can find 2-dimensional additive faces and project in 3 directions to get covered intervals and uncovered intervals.

If there is some uncovered interval, our code can construct a nontrivial effective equivariant perturbation function, using the same technique in \cite{basu-hildebrand-koeppe:equivariant,igp_survey}. Thus, extremality test returns false.

If $[0,1]$ is covered by $C_1,\dots,C_k$, each $C_i$ is a connected covered interval. By Interval Lemma, we know $\phi$ and $\tilde{\phi}$ are affine linear on each $C_i$ with the same slope. Therefore, we have $k$ slope variables $s_1,\dots,s_k$. Between each pair of adjacent intervals, there may exists a jump, where $\phi$ is discontinuous. So we also need to introduce $m$ jump variables. One can use the functionality of piecewise linear functions to define $g\colon[0,1]\to {\mathbb{R}}^{k+m}$ so that $\tilde{\phi}(x)=g(x)\cdot (s_1,\dots,s_k, h_1,\dots,h_m)$. 

The next step is to find all constraints $\tilde{\phi}(x)$ needs to satisfy and solve a linear system of $(s_1,\dots,s_k, h_1,\dots,h_m)$. If there is only the trivial solution, then there is no finite dimensional perturbation. If one nonzero function $\tilde{\phi}(x)$ is found, then a positive $\epsilon$ can be found by our code.

Using the following lemma can simplify the extremality test.

\begin{lemma}
Let $\phi$ be a piecewise linear maximal classical DFF, then $0\le\phi(x_1^-)\le x_1$. If $0<\phi(x_1^-)<x_1$, then it is not extreme. If $\phi(x_1^-)=x_1$, then $\phi(x)=x$, thus extreme.
\end{lemma}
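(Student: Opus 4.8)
The plan is to pin down $\phi$ through its behaviour on the first piece and then recycle the convex-combination construction from the preceding (strictly-increasing) lemma. By that lemma, $\phi(0)=0$ and $\phi$ is affine on the first interval, $\phi(x)=sx$ on $[0,x_1)$ with $s\ge 0$, so $\phi(x_1^-)=sx_1$ and nonnegativity is immediate. For the bound $\phi(x_1^-)\le x_1$ I would first show that $s$ is the \emph{minimal} slope of $\phi$: if some piece $(a_i,a_{i+1})$ had slope $s'<s$, then picking $y$ in its interior and $x>0$ small with $x<x_1$ and $x+y<a_{i+1}$, superadditivity gives $sx+\phi(y)=\phi(x)+\phi(y)\le\phi(x+y)=\phi(y)+s'x$, i.e.\ $s\le s'$, a contradiction. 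Since $\phi$ is nonnegative and superadditive it is nondecreasing, so all its jumps are nonnegative; as the total increase of $\phi$ over $[0,1]$ equals $\phi(1)-\phi(0)=1$ and is the sum of these jumps plus $\sum_i s_i|I_i|\ge s\cdot 1$, we get $s\le 1$, hence $0\le\phi(x_1^-)=sx_1\le x_1$.

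For the case $0<\phi(x_1^-)<x_1$, i.e.\ $0<s<1$, I would reuse the preceding construction: set $\phi_1(x)=\frac{\phi(x)-sx}{1-s}$ (and write $\mathrm{id}$ for the identity $x\mapsto x$). Then $\phi_1(0)=0$, $\phi_1\equiv 0$ on $[0,x_1)$, $\phi_1(1)=\frac{1-s}{1-s}=1$, $\phi_1$ is superadditive (a linear function subtracted from a superadditive one), and $\phi_1(x)+\phi_1(1-x)=\frac{\phi(x)+\phi(1-x)-s}{1-s}=1$. Since $s$ is the minimal slope, $\phi-s\cdot\mathrm{id}$ has nonnegative slopes and nonnegative jumps and vanishes at $0$, so $\phi_1\ge 0$; the symmetry relation then forces $\phi_1\le 1$. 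Thus $\phi_1$ is a maximal classical DFF by \autoref{thm:maximality-classical}, and $\phi=s\cdot\mathrm{id}+(1-s)\phi_1$ writes $\phi$ as a proper convex combination of the two distinct maximal DFFs $\mathrm{id}$ and $\phi_1$ (distinct because $\phi_1$ vanishes on $(0,x_1)$ while $\mathrm{id}$ does not), so $\phi$ is not extreme.

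For the case $\phi(x_1^-)=x_1$, i.e.\ $s=1$: all slopes are $\ge 1$, all jumps are $\ge 0$, and the total increase over $[0,1]$ is exactly $1$, which forces every slope to equal $1$ and every jump to vanish; with $\phi(0)=0$ this gives $\phi(x)=x$. It remains to check that $\mathrm{id}$ is extreme, which was not established by the earlier lemmas. If $\tilde\phi$ is an effective perturbation of $\mathrm{id}$, then by the continuity lemma $\tilde\phi$ is Lipschitz on all of $[0,1]$ (as $\mathrm{id}$ is continuous), and by the additivity lemma $\tilde\phi$ is additive on the whole triangle $R=\{(x,y):x,y,x+y\in[0,1]\}$ (as $\mathrm{id}$ is); a measurable solution of Cauchy's equation is linear, so $\tilde\phi(x)=cx$, and the symmetry of $\mathrm{id}\pm\epsilon\tilde\phi$ forces $1+\epsilon c=1$, hence $\tilde\phi=0$. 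So $\mathrm{id}$ is extreme.

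The argument is short; the only places needing care are the treatment of limits and jumps at breakpoints in the minimal-slope step (cleanly phrased, as in the maximality test, through the affine pieces $\nabla\phi_F$ of $\Delta\P$) and the separate verification that the identity DFF is extreme, which was not needed for the earlier lemmas — neither presents a real obstacle.
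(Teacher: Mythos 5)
Your proof is correct. Note that the paper states this lemma without any proof at all, so there is nothing to compare against except the preceding (strictly-increasing) lemma, whose proof uses exactly the decomposition you reuse: $s$ is the minimal slope, $s\le 1$ via the total increase $\phi(1)-\phi(0)=1$, and for $0<s<1$ the function $\phi_1=\frac{\phi-sx}{1-s}$ is maximal and exhibits $\phi=s\cdot\mathrm{id}+(1-s)\phi_1$ as a proper convex combination of distinct maximal DFFs. Your argument is the natural completion of that template, and the two pieces you add that the paper never supplies anywhere --- the upper bound $\phi(x_1^-)\le x_1$ via nonnegative jumps plus minimal slope, and the extremality of the identity --- are both sound. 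For the latter, a slightly more elementary route avoids the perturbation machinery entirely: if $\mathrm{id}=\frac{1}{2}(\phi_1+\phi_2)$ with both maximal, then summing the two superadditivity slacks gives $\nabla\phi_1(x,y)+\nabla\phi_2(x,y)=0$ with each term $\le 0$, so each $\phi_i$ is additive on $\{x,y\ge 0,\ x+y\le 1\}$; being bounded (range in $[0,1]$), each is linear, and $\phi_i(1)=1$ forces $\phi_i=\mathrm{id}$. This sidesteps the appeal to Lipschitz continuity and measurable solutions of Cauchy's equation, but your version is equally valid given the lemmas already established in the appendix.
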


\begin{lemma}
Let $\phi$ be a piecewise linear maximal classical DFF, and $\tilde{\phi}$ be a function: $[0,1] \to \mathbb{R}$. Assume $\phi(x_1^-)=0$, and there are no uncovered interval. Let $\hat{B}$ be the union of breakpoints of $\phi$ and $\tilde{\phi}$, $\P$ be the new complex based on $\hat{B}$. Consider all vertices $(x,y)$, including limit cones in discontinuous case, on the new complex where $\phi$ satisfies additivity, and construct a linear system of equations for $\tilde{\phi}$: $\nabla\tilde{\phi}(x,y)=0$, $\tilde{\phi}(1)=0$ and $\tilde{\phi}(x_1^-)=0$. If there is only the trivial solution $\tilde{\phi}(x)=0$, then $\phi$ is extreme. If there is some nontrivial solution $\tilde{\phi}$, then there exists $\epsilon>0$ such that $\phi+\epsilon \tilde{\phi}$ and $\phi-\epsilon \tilde{\phi}$ maximal, thus $\phi$ is not extreme.
\end{lemma}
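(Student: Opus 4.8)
The plan is to establish the two asserted implications separately, using the three
preceding lemmas of this section (Lipschitz continuity of perturbations; the inheritance
of additivity, ``$\tilde{\phi}$ is additive wherever $\phi$ is additive''; and the bound
$0\le\phi(x_1^-)\le x_1$), together with the characterization of classical maximality
\autoref{thm:maximality-classical} and the Interval Lemma from \cite{igp_survey}.
Throughout, ``additive vertex'' must be read in the discontinuous sense as a pair
$(v,F)$ with $v\in\verts(F)$ and $\nabla\phi_F(v)=0$ (the limit cones), and the stated
linear system is understood to impose $\nabla\tilde\phi_F(v)=0$ precisely on those pairs.

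For the implication ``a nontrivial solution $\tilde\phi$ exists $\Rightarrow$ $\phi$ is
not extreme'', I would set $\phi^{\pm}=\phi\pm\epsilon\tilde\phi$ and verify the three
conditions of \autoref{thm:maximality-classical} for $\epsilon>0$ small. The boundary
value and symmetry come essentially for free: since $\phi$ is maximal, every symmetry
face $F(I,J,\{1\})$ is additive, hence enters the system, so $\nabla\tilde\phi=0$ on
those faces; combined with $\tilde\phi(1)=0$ this yields $\tilde\phi(x)+\tilde\phi(1-x)=0$
for all~$x$, giving symmetry of $\phi^{\pm}$. Likewise the additive face
$F([0,x_1],[0,x_1],[0,x_1])$ together with $\tilde\phi(x_1^-)=0$ forces $\tilde\phi\equiv0$
on $[0,x_1)$, in particular $\tilde\phi(0)=0$, hence $\phi^{\pm}(0)=0$. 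The real content
is superadditivity: $\nabla\phi^{\pm}=\nabla\phi\pm\epsilon\nabla\tilde\phi$ vanishes on
every additive pair (both summands do), while on the finitely many remaining pairs $(v,F)$
one has $\nabla\phi_F(v)\ge\mu>0$; choosing $\epsilon<\mu/M$ with
$M=\max_{(v,F)}|\nabla\tilde\phi_F(v)|$ (finite and positive since $\tilde\phi\ne0$) keeps
$\nabla\phi^{\pm}_F(v)\ge0$ at all pairs, hence $\nabla\phi^{\pm}\ge0$ everywhere by
affineness on faces. Then $\phi=\tfrac12(\phi^++\phi^-)$ exhibits $\phi$ as a convex
combination of two distinct maximal DFFs.

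For the contrapositive of the first claim, ``$\phi$ not extreme $\Rightarrow$ a nontrivial
solution exists'', write $\phi=\tfrac12(\phi_1+\phi_2)$ with $\phi_1\ne\phi_2$ maximal DFFs
and put $\tilde\phi=\phi_1-\phi\ne0$, an effective perturbation. Then
$\tilde\phi(1)=\phi_1(1)-1=0$; on $[0,x_1)$ the additive face
$F([0,x_1],[0,x_1],[0,x_1])$ and the Interval Lemma force $\tilde\phi(x)=\tilde s\,x$,
and requiring $\phi\pm\epsilon\tilde\phi\ge0$ near $0$ (nonnegativity of a classical DFF on
its first piece) forces $\tilde s=0$, so $\tilde\phi(x_1^-)=0$; and by the
additivity-inheritance lemma $\nabla\tilde\phi_F(v)=0$ at every additive pair $(v,F)$.
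The hypothesis that no interval is uncovered is used here: applying the Interval Lemma
componentwise over the covered components $C_1,\dots,C_k$ shows that $\tilde\phi$ is
piecewise linear with breakpoints among those of $\phi$ (plus jumps at component
boundaries, which are breakpoints of $\phi$), so $\hat B$ is finite and $\tilde\phi$ is a
genuine nonzero solution of the stated linear system.

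I expect the main obstacle to be the bookkeeping of the discontinuous case: one must keep
the ``additive vertex'' data at the level of pairs $(v,F)$, check that the linear system
and the lemma ``$\tilde\phi$ additive where $\phi$ is additive'' both refer to exactly the
same pairs, and make sure the finiteness/compactness argument producing the uniform gap
$\mu>0$ survives when several faces with distinct limit values $\nabla\phi_F(v)$ meet at a
single vertex $v$. The secondary, milder difficulty is justifying that an arbitrary
effective perturbation is automatically piecewise linear over a finite refinement; this is
exactly where the no-uncovered-interval hypothesis is indispensable, via the Interval
Lemma applied on each covered component.
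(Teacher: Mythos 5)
Your proposal follows essentially the same route as the paper's proof: the reverse direction by constructing $\phi\pm\epsilon\tilde\phi$, noting that $\nabla(\phi\pm\epsilon\tilde\phi)$ vanishes on additive vertices and choosing $\epsilon$ from the finite positive gap at the strictly superadditive ones, and the forward direction by observing that (via the Interval Lemma on the covered components) any effective perturbation is itself a piecewise linear nonzero solution of the linear system. The only step the paper includes that you omit is checking that $\phi\pm\epsilon\tilde\phi$ actually maps into $[0,1]$ (which follows since both vanish on $[0,x_1)$ and are superadditive, hence nondecreasing from $0$ to $1$); apart from that, your bookkeeping of the discontinuous case is, if anything, more explicit than the paper's.
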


\begin{proof}
Note that if $\tilde{\phi}$ is an effective perturbation function, then it must satisfy $\nabla\tilde{\phi}(x,y)=0$ and $\tilde{\phi}(1)=0$. If $[0,1]$ is covered by $C_1,\dots, C_k$, each $C_i$ is a connected covered components. By Interval Lemma, we know $\phi$ and $\tilde{\phi}$ are affine linear on each $C_i$ with the same slope. It is clear that $\tilde{\phi}=0$ if the linear system has only the trivial solution, thus $\phi$ is extreme.

Suppose there is a nonzero solution $\tilde{\phi}$ to the linear system, and $\tilde{\phi}$ should also be a piecewise linear function on $[0,1]$. Denote $M$ to be the largest absolute value of $\tilde{\phi}$, and $s$ to be the largest absolute slope value of $\tilde{\phi}$. Let $V$ be the set of vertices, including limit cones in the discontinuous case, where $\tilde{\phi}$ satisfy strict superadditivity, which means $\nabla\tilde{\phi}(x,y)>0$. Since we are restricted to piecewise linear functions, $V$ is a finite set. Let $\delta=\min\{\nabla\phi(x,y)|(x,y)\in V\}>0$, and $\sigma=\max\{|\nabla\tilde{\phi}(x,y)||(x,y)\in V\}> 0$. Choose $\epsilon=\frac{\delta}{\sigma}>0$, then claim that $\phi+\epsilon\tilde{\phi}$ and $\phi-\epsilon \tilde{\phi}$ are both superadditive. Compute $ \nabla(\phi+\epsilon\tilde{\phi})|_{(x,y)}=\nabla\phi|_{(x,y)}+\epsilon\nabla\tilde{\phi}|_{(x,y)}$. If $(x,y)\in \hat{B}$, then both $\nabla\phi|_{(x,y)}$ and $\nabla\tilde{\phi}|_{(x,y)}$ are $0$. Otherwise $(x,y)\in V$, then $\nabla\phi|_{(x,y)}+\epsilon\nabla\tilde{\phi}|_{(x,y)}\ge\delta-\epsilon\sigma=0$.

Consider $\phi$ and $\tilde{\phi}$ on the interval $[x_0,x_1)$, they are both linear with slope 0, since $\phi(x_1^-)=0$ and $\tilde{\phi}(x_1^-)=0$. Then $\phi+\epsilon \tilde{\phi}$ and $\phi-\epsilon \tilde{\phi}$ are both  nonnegative on $[x_0,x_1)$ and superadditive, so they are increasing and stay in the range of $[0,1]$.

Symmetry condition of $\phi+\epsilon \tilde{\phi}$ and $\phi-\epsilon \tilde{\phi}$ is implied by the symmetry condition of $\phi$. Every $(x,1-x)$ in the complex is an additive vertex, then we get the symmetry condition from the linear system $\nabla\tilde{\phi}(x,1-x)=0$.

Therefore, both $\phi+\epsilon \tilde{\phi}$ and $\phi-\epsilon \tilde{\phi}$ are maximal DFFs, thus $\phi$ is not extreme.
\end{proof}

\begin{lemma}
Suppose $\phi_1$ and $\phi_2$ are two maximal classical DFFs, and $\phi_1(\phi_2(x))$ is an extreme DFF. If $\phi_2$ is continuous, then $\phi_1$ must be extreme.
\end{lemma}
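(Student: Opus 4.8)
The plan is to argue by contraposition: assuming $\phi_1$ is \emph{not} extreme, I will exhibit a nontrivial convex decomposition of the composition $\phi_1\circ\phi_2$ into two \emph{distinct} maximal classical DFFs, contradicting the hypothesis that $\phi_1\circ\phi_2$ is extreme. Since a maximal DFF fails to be extreme exactly when it is the midpoint $\tfrac12(\psi_1+\psi_2)$ of two distinct maximal classical DFFs, I start from such a decomposition $\phi_1=\tfrac12(\psi_1+\psi_2)$ with $\psi_1\neq\psi_2$.

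First I would precompose with $\phi_2$: for all $x\in[0,1]$ one has $\phi_1(\phi_2(x))=\tfrac12\bigl(\psi_1(\phi_2(x))+\psi_2(\phi_2(x))\bigr)$. By the composition property of maximal classical DFFs (Proposition~2.3 of \cite{alves-clautiaux-valerio-rietz-2016:dual-feasible-book}, recalled in \autoref{s:preliminaries}), each of $\psi_1\circ\phi_2$ and $\psi_2\circ\phi_2$ is again a maximal classical DFF. So $\phi_1\circ\phi_2$ is displayed as a convex combination of two maximal classical DFFs; it remains only to verify that the two pieces are distinct.

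The crux is this distinctness, and it is exactly where the continuity of $\phi_2$ enters. Suppose, for contradiction, that $\psi_1\circ\phi_2=\psi_2\circ\phi_2$ on $[0,1]$. Then $\psi_1$ and $\psi_2$ agree on the image $\phi_2([0,1])$. By \autoref{thm:maximality-classical} we have $\phi_2(0)=0$, and the symmetry condition gives $\phi_2(1)=1$; since $\phi_2$ is continuous on $[0,1]$ with range contained in $[0,1]$, the intermediate value theorem forces $\phi_2([0,1])=[0,1]$. Hence $\psi_1=\psi_2$ on all of $[0,1]$, contradicting $\psi_1\neq\psi_2$. Therefore $\psi_1\circ\phi_2\neq\psi_2\circ\phi_2$, so $\phi_1\circ\phi_2$ is a convex combination of two distinct maximal classical DFFs, i.e.\ not extreme — contradicting the hypothesis. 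This forces $\phi_1$ to be extreme.

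I expect the surjectivity step $\phi_2([0,1])=[0,1]$ to be the only genuine obstacle, and it is precisely where continuity of $\phi_2$ is indispensable: without it the image could be a proper subset of $[0,1]$ (as for the step-function DFFs arising from discrete DFFs), and distinct maximal DFFs $\psi_1,\psi_2$ agreeing on that subset would not be excluded, so the composition could be extreme while $\phi_1$ is not. Everything else is a routine application of the closure of maximal classical DFFs under composition and under convex combinations.
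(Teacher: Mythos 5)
Your proposal is correct and follows essentially the same route as the paper's own proof: assume $\phi_1=\tfrac12(\psi_1+\psi_2)$ with $\psi_1\neq\psi_2$, use closure of maximal classical DFFs under composition, and derive distinctness of $\psi_1\circ\phi_2$ and $\psi_2\circ\phi_2$ from the fact that continuity and maximality force $\phi_2([0,1])=[0,1]$. Your write-up actually spells out the surjectivity step (via $\phi_2(0)=0$, $\phi_2(1)=1$, and the intermediate value theorem) more explicitly than the paper does.
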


\begin{proof}
Suppose $\phi_1$ is not extreme, which implies there exist two different maximal DFFs $\hat{\phi_1}$ and $\hat{\phi_2}$ such that $\phi_1=\frac{1}{2}(\hat{\phi_1}+\hat{\phi_2})$. Then 

$$\phi_1(\phi_2(x))=\frac{1}{2}(\hat{\phi_1}(\phi_2(x)+\hat{\phi_2}(\phi_2(x))$$

Both $\hat{\phi_1}(\phi_2(x))$ and $\hat{\phi_1}(\phi_2(x))$ are maximal DFFs, and the above equality shows that $\phi_1(\phi_2(x))$ can be expressed as a convex combination of $\hat{\phi_1}(\phi_2(x))$ and $\hat{\phi_1}(\phi_2(x))$. To find a contradiction, we only need to show $\hat{\phi_1}(\phi_2(x))$ and $\hat{\phi_1}(\phi_2(x))$ are two different functions. The range of $\phi_2$ is exactly $[0,1]$ due to maximality and continuity of $\phi_2$. Since $\hat{\phi_1}$ and $\hat{\phi_2}$ are distinct and the range of $\phi_2$ is $[0,1]$, $\hat{\phi_1}(\phi_2(x))$ and $\hat{\phi_1}(\phi_2(x))$ are distinct. Therefore, $\phi_1(\phi_2(x))$ is not extreme, which is a contradiction. So $\phi_1$ must be extreme.
\end{proof}

\smallbreak

\clearpage
\section{Computer-based search}
\label{s:search}

In this section, we discuss how
computer-based search can help in finding extreme classical DFFs. Most known
classical DFFs in the monograph
\cite{alves-clautiaux-valerio-rietz-2016:dual-feasible-book} have a similar
structure. The known continuous DFFs are 2-slope functions, and known
discontinuous DFFs have slope 0 in every affine linear piece. By using the
Normaliz and PPL, we have found many more new extreme classical DFFs.  

\subsection{PPL and Normaliz}

Based on a detailed computational study regarding the performance of vertex
enumeration codes in \cite{koeppe-zhou:extreme-search}, 
we consider two libraries, the Parma Polyhedra Library (PPL) and Normaliz.  
Both are convenient to use within the software SageMath \cite{sage}. 

\subsection{Functions on a grid}

First, our goal is to find all piecewise linear extreme DFFs, both continuous and discontinuous with rational breakpoints with fixed common denominator $q\in\mathbb{N}$. We use $B_q$ to denote the set $\{0, \frac{1}{q}, \frac{2}{q},\dots,\frac{q-1}{q},1\}$

\begin{definition}
Denote $\Phi_C(q)$ to be the set of all maximal continuous piecewise linear DFFs with breakpoints in $B_q$, and $\Phi_D(q)$ to be the set of all maximal discontinuous piecewise linear DFFs with breakpoints in $B_q$.
\end{definition}

\begin{theorem}
Both $\Phi_C(q)$ and $\Phi_D(q)$ are finite dimensional convex polytopes, if $q$ is fixed.
\end{theorem}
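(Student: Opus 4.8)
The plan is to realize each of $\Phi_C(q)$ and $\Phi_D(q)$ as the solution set of a finite system of linear equations and inequalities inside a fixed finite-dimensional vector space, and then to observe that this solution set is bounded, hence a polytope.

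First I would fix the ambient space. A continuous piecewise linear function over $\mathcal{P}_{B_q}$ is uniquely determined by its vector of breakpoint values $(\phi(0),\phi(1/q),\dots,\phi(1))\in\mathbb{R}^{q+1}$, and conversely every vector in $\mathbb{R}^{q+1}$ is realized by such a function; this identifies the space of continuous piecewise linear functions over $\mathcal{P}_{B_q}$ with $\mathbb{R}^{q+1}$. A (possibly discontinuous) piecewise linear function over $\mathcal{P}_{B_q}$ in the sense of \autoref{s:definition} is determined by the $q$ affine pieces $\phi_I(x)=c_I x+b_I$ on the closed cells $I=[i/q,(i+1)/q]$ together with the function values $\phi(i/q)$ at the $q+1$ breakpoints; this identifies the space of all such functions with some $\mathbb{R}^N$ (one may take $N=3q+1$). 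In either model, every quantity appearing in the maximality characterization — the breakpoint values $\phi(i/q)$, the one-sided limits $\phi((i/q)^\pm)$, and the limit values $\nabla\phi_F(u,v)$ at vertices of faces $F$ of $\Delta\mathcal{P}_{B_q}$ — is a \emph{linear} functional of the chosen coordinates.

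Next I would translate the defining properties into linear constraints. Being a classical DFF at all requires $\phi([0,1])\subseteq[0,1]$; by piecewise linearity this is equivalent to the finitely many inequalities $0\le\phi(i/q)\le 1$, together with $0\le\phi((i/q)^\pm)\le 1$ in the discontinuous case. The three conditions of \autoref{thm:maximality-classical} become: $\phi(0)=0$, a single linear equation; symmetry $\phi(x)+\phi(1-x)=1$, which, since $x\mapsto 1-x$ maps $B_q$ onto $B_q$ and hence $x\mapsto\phi(x)+\phi(1-x)$ is piecewise linear over $\mathcal{P}_{B_q}$, holds on all of $[0,1]$ iff it holds at the breakpoint values and (discontinuous case) at the one-sided limits at breakpoints — finitely many linear equations; and superadditivity $\nabla\phi\ge 0$ on $\{(x,y):x,y,x+y\in[0,1]\}$, which holds iff $\nabla\phi_F(u,v)\ge 0$ at every vertex $(u,v)$ of every face $F\in\Delta\mathcal{P}_{B_q}$, because $\nabla\phi_F$ is affine on the bounded polygon $F$ and there are only finitely many faces, each with finitely many vertices. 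Consequently $\Phi_C(q)$ and $\Phi_D(q)$ are intersections of finitely many hyperplanes and closed halfspaces in $\mathbb{R}^{q+1}$ and in $\mathbb{R}^N$ respectively, hence polyhedra; and each is bounded, since the range constraints confine all breakpoint values and one-sided limits, and therefore all coordinates, to $[0,1]$. A bounded polyhedron in a finite-dimensional space is a polytope, which is the assertion.

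The step needing the most care is the discontinuous case. One must coordinatize the ambient space of $\Phi_D(q)$ so that the breakpoint values, the one-sided limits, and the face-limit values $\nabla\phi_F(u,v)$ entering the definition of additive faces are genuinely linear in the parameters (otherwise the constraints are not linear), and one must justify that checking superadditivity and symmetry only at the finitely many vertices and one-sided limits of $\Delta\mathcal{P}_{B_q}$ already forces them on all of $[0,1]^2$. Both points follow from the fact that $\nabla\phi$ is affine on the relative interior of each bounded face of $\Delta\mathcal{P}_{B_q}$, exactly as in the Gomory--Johnson setting \cite{basu-hildebrand-koeppe:equivariant,igp_survey,hong-koeppe-zhou:software-abstract-with-link-to-software}.
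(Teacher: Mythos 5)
Your proposal is correct and follows essentially the same route as the paper: identify $\Phi_C(q)$ (resp.\ $\Phi_D(q)$) with a subset of a finite-dimensional space via the breakpoint values (resp.\ values plus one-sided limits), impose the maximality characterization as finitely many linear equations and inequalities, and conclude from boundedness that the result is a polytope. You are somewhat more explicit than the paper about why superadditivity and symmetry reduce to finitely many vertex constraints on $\Delta\mathcal{P}_{B_q}$, but the underlying argument is the same.
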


\begin{proof}
Note that any $\phi\in\Phi_C(q)$ is uniquely determined by the values at the breakpoints. So we just need to consider discrete functions on $B_q$, or the restriction of $\phi$: $\phi|_{B_q}$. Since $\phi$ is maximal, $\phi|_{B_q}$ should also stays in the range $[0,1]$, and satisfy superadditivity and symmetry condition.

For each possible breakpoint $\frac{i}{q}$, we introduce a variable $a_i$ to be the value at $\frac{i}{q}$. After adding the inequalities from superadditivity and symmetry condition and $0\le a_i \le 1$, $a_0=0$, we will get a polytope in $q+1$ dimensional space, because there are only finitely many inequalities and each variable is bounded.

It is not hard to prove the convex combination of two maximal continuous piecewise linear DFFs with breakpoints in $B_q$ is also in $\Phi_C(q)$.

We can get $\phi$ back by interpolating $\phi|_{B_q}$. Therefore  $\Phi_C(q)$ is a finite dimensional convex polytope. Similarly we can prove for discontinuous case, since we just need to add two more variables for the left and right limits at each possible breakpoint $\frac{i}{q}$.
\end{proof}

\begin{definition}
Denote $\Phi'_C(q)$ to be the set of all discrete functions $\phi$ on $B_q$ which satisfy superadditivity, symmetry condition and $\phi(0)=0$. Denote $\Phi'_D(q)$ to be the set of all discrete functions $\phi$ on the grid: $$B'_q=\{0, \tfrac{1}{q}^-, \tfrac{1}{q}, \tfrac{1}{q}^+, \tfrac{2}{q}^-, \tfrac{2}{q}, \tfrac{2}{q}^+,\dots, \tfrac{q-1}{q}^-, \tfrac{q-1}{q}, \tfrac{q-1}{q}^+, 1\}$$ which satisfy superadditivity, symmetry condition and $\phi(0)=0$.
\end{definition}

As we can see in the above proof, the polytope of continuous functions and that of discrete functions are the same polytope. Continuous functions and discrete functions have a bijection by restriction and interpolation. Therefore, we have $\Phi_C(q)\cong \Phi'_C(q)$ and $\Phi_D(q)\cong \Phi'_D(q)$.

To summarize, the strategy is to discretize the interval $[0,1]$ and define discrete functions on $B_q$. After adding the inequalities from superadditivity and symmetry condition, the space of functions becomes a convex polytope. Extreme points of the polytope can be found by Normaliz or PPL. The possible DFF $\phi$ is obtained by interpolating values on $B_q$ from each extreme point. Under such hypotheses, $\phi$ is uniquely determined by its values on $B_q$. Discontinuous functions can also be found in this way, since we just need to add more variables for the left and right limits at all possible breakpoints.

So far the function $\phi$ is only a candidate of extreme DFF, since extremality of the discrete function does not always imply extremality of the continuous function by interpolation. We need to use extremality test described in Appendix \ref{s:extremality} to pick those actual extreme DFFs. The following theorem provides an easier verification for extremality: if $\phi$ has no uncovered interval, then we can claim we find an extreme DFF.

\begin{theorem}
Let $\phi$ be a function from interpolating values of some extreme point of the polytope $\Phi'_C(q)$ or $\Phi'_D(q)$. If there is no uncovered interval, then $\phi$ is extreme.
\end{theorem}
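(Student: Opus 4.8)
The plan is to reduce this statement to the discrete analogue of the extremality machinery developed in Appendix~\ref{s:extremality}, exploiting the bijection $\Phi_C(q)\cong\Phi'_C(q)$ (resp. $\Phi_D(q)\cong\Phi'_D(q)$). Suppose, for contradiction, that $\phi$ is not extreme. Then there is a nontrivial effective perturbation $\tilde\phi$, i.e. $\phi+\epsilon\tilde\phi$ and $\phi-\epsilon\tilde\phi$ are both maximal classical DFFs for some $\epsilon>0$. The first step is to show that such $\tilde\phi$ can be taken to be piecewise linear with breakpoints in $B_q$ (no new breakpoints are forced): since $\phi$ is linear on each interval of $\mathcal{P}_{B_q}$ and ``no uncovered interval'' means every interval of $\mathcal{P}_{B_q}$ lies in a covered component, the Interval Lemma (as invoked in the last lemma of Appendix~\ref{s:extremality}) forces $\tilde\phi$ to be affine on each covered component with the same slope as $\phi$ there; hence $\tilde\phi$ is determined by finitely many slope variables $s_1,\dots,s_k$ and (in the discontinuous case) jump variables $h_1,\dots,h_m$, and in particular $\tilde\phi|_{B_q}$ (or $\tilde\phi|_{B'_q}$) determines $\tilde\phi$.

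Next I would translate the constraints on $\tilde\phi$ into linear conditions on this finite-dimensional vector. By the lemma ``$\tilde\phi$ satisfies additivity where $\phi$ satisfies additivity'' we have $\nabla\tilde\phi(x,y)=0$ at every vertex $(x,y)$ of $\Delta\mathcal{P}_{B_q}$ (including limit cones in the discontinuous case) where $\phi$ is additive; and $\tilde\phi(1)=0$ follows from the symmetry condition $\phi(x)+\phi(1-x)=1$ applied to $\phi\pm\epsilon\tilde\phi$ at $x=1$, while $\tilde\phi(x_1^-)=0$ follows from $\phi(x_1^-)=0$ (which holds because $\phi$ is a maximal classical DFF, so continuous at $0$ from the right with $\phi(0)=0$, forcing slope $0$ on the first interval) together with the nonnegativity and superadditivity of $\phi\pm\epsilon\tilde\phi$ near $0$. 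These are exactly the equations of the homogeneous linear system in the last lemma of Appendix~\ref{s:extremality}. Now the key point: this same linear system is, up to the bijection, the tangent cone description of the face of the polytope $\Phi'_C(q)$ (resp. $\Phi'_D(q)$) at the point $\phi|_{B_q}$ — a vector $\tilde\psi$ on $B_q$ lies in the lineality of that face precisely when it satisfies all the additivity equalities, the symmetry equalities, and $\tilde\psi(0)=0$. Since $\phi|_{B_q}$ is a \emph{vertex} (extreme point) of this polytope, that lineality space is $\{0\}$, so the only solution of the linear system is $\tilde\phi|_{B_q}=0$, whence $\tilde\phi=0$ by the first step, contradicting nontriviality.

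I expect the main obstacle to be making the previous paragraph's ``this is the same linear system'' precise, especially in the discontinuous case. One must check that the defining inequalities of $\Phi'_D(q)$ that are \emph{tight} at the vertex $\phi|_{B'_q}$ are exactly the superadditivity inequalities $\nabla\phi(x,y)\ge 0$ at vertices $(x,y)$ where equality holds, together with all the symmetry equations and $\phi(0)=0$ and the (automatically tight) $\phi(x_1^-)\ge 0$; and that ``no uncovered interval'' guarantees that the Interval-Lemma reduction loses nothing, i.e. every affine-linearity constraint that $\tilde\phi$ must obey is already a consequence of these tight inequalities rather than requiring extra breakpoints. Once this dictionary between ``vertex of the polytope'' and ``trivial solution of the perturbation linear system'' is established, the result is immediate; alternatively, one can phrase it directly as: the effective perturbations $\tilde\phi$ with breakpoints in $B_q$ are in linear bijection with the lineality space of the minimal face of $\Phi'_C(q)$ (resp.\ $\Phi'_D(q)$) containing $\phi$, which is $\{0\}$ since $\phi$ corresponds to an extreme point, and by the covered-interval hypothesis every effective perturbation is of this form.
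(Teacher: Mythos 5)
Your proposal is correct in substance and shares its decisive first step with the paper: the hypothesis of no uncovered interval, together with the Interval Lemma and the fact that the endpoints of the covered components lie in $B_q$, forces any effective perturbation $\tilde\phi$ to be piecewise linear with breakpoints in $B_q$ (resp.\ $B'_q$), so that $\tilde\phi$ is determined by $\tilde\phi|_{B_q}$. Where you diverge is in the second step. You pass to the homogeneous linear system of additivity and symmetry equations and argue that its solution set is the lineality space of the minimal face of $\Phi'_C(q)$ at $\phi|_{B_q}$, which is $\{0\}$ at a vertex; you correctly flag that identifying this system with the set of tight defining constraints of the polytope (especially in the discontinuous case) is the delicate point. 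The paper avoids this dictionary entirely: since $\phi+\epsilon\tilde\phi$ and $\phi-\epsilon\tilde\phi$ are maximal DFFs with breakpoints in $B_q$, their restrictions to $B_q$ are two points of the polytope whose midpoint is the extreme point $\phi|_{B_q}$, hence both coincide with it and $\tilde\phi|_{B_q}=0$. That midpoint argument buys exactly what your tangent-cone argument does, with no need to match inequality systems, so the obstacle you anticipate simply does not arise on the paper's route. One small inaccuracy on your side: maximality of a classical DFF does not force slope $0$ on the first interval (consider $\phi(x)=x$), so your justification of the equation $\tilde\phi(x_1^-)=0$ is not correct as stated; fortunately that equation is not load-bearing, since the vertex property alone already forces $\tilde\phi|_{B_q}=0$.
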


\begin{proof}
Here we only give the proof for continuous case, and the proof for discontinuous case is similar.
Suppose $\phi$ is obtained by in interpolating the discrete function $\phi|_{B_q}$, which is an extreme point of the polytope $\Phi'_C(q)$, and $\tilde{\phi}$ is an effective perturbation function.

If there is no uncovered interval for $\phi$, then the interval $[0,1]$ is covered by $C_1,\dots, C_k$, each $C_i$ is a connected covered component. Since every breakpoint of $\phi$ is in the form of $\frac{i}{q}$, the endpoints of $C_i$ are also in the form of $\frac{i}{q}$. We know $\phi$ and $\tilde{\phi}$ are affine linear on each $C_i$ with the same slope by Interval Lemma, and continuity of $\phi$ implies continuity of $\tilde{\phi}$. Therefore, we know $\tilde{\phi}$ is also a continuous function with breakpoints in $B_q$, which means $\phi + \epsilon \tilde{\phi}$ and $\phi + \epsilon \tilde{\phi}$ both have the same property. The maximality of $\phi + \epsilon \tilde{\phi}$ and $\phi + \epsilon \tilde{\phi}$ implies their restrictions to $B_q$ are also in the polytope $\Phi_C(q)$, and 

$$\phi|_{B_q}=\frac{(\phi+\epsilon\tilde{\phi})|_{B_q}+(\phi-\epsilon\tilde{\phi})|_{B_q}}{2}$$

Since $\phi|_{B_q}$ is an extreme point of the polytope $\Phi_C(q)$, then $\phi|_{B_q}=(\phi+\epsilon\tilde{\phi})|_{B_q}=(\phi-\epsilon\tilde{\phi})|_{B_q}$, which implies $\phi=\phi+\epsilon\tilde{\phi}=\phi-\epsilon\tilde{\phi}$. Therefore, $\phi$ is extreme.

\end{proof}

\autoref{table:nonlin} shows the results and the computation time for
different values of~$q$ for the continuous case.
As we can see in the table, the actual extreme DFFs are much fewer than the vertices of the polytope $\Phi_C(q)$. PPL is faster when $q$ is small and Normaliz performs well when $q$ is relatively large. We can observe that the time cost increases dramatically as $q$ gets large. Similar to \cite{koeppe-zhou:extreme-search}, we can apply the preprocessing
program ``redund'' provided by lrslib (version 5.08), which removes
redundant inequalities using Linear Programming. However, in contrast to the computation in \cite{koeppe-zhou:extreme-search}, removing redundancy from the system does not improve the efficiency.  Instead, for relatively large $q$, the time cost after preprocessing is a little more than that of before preprocessing for both PPL and Normaliz.

\begin{landscape}
\begin{table}[tp]
\caption{Search for extreme DFFs and efficiency of vertex enumeration codes (continuous case)} 
\centering 
\def\arraystretch{1.1}
\begin{tabular}{d{3.2} d{3.2} d{3.2} d{3.2} d{3.2} d{3.2}  d{3.2} d{3.2} } 
\toprule
\multicolumn{6}{c}{Polytope $\Phi_C(q)$} & \multicolumn{2}{c}{Running times\,(s)}\\
\cmidrule(lr){1-6}\cmidrule(lr){7-8} 
\multicolumn{1}{c}{$q$}  
    &   \multicolumn{1}{c}{dim} 
        &   \multicolumn{2}{c}{inequalities}    
    &        \multicolumn{1}{c}{vertices}  
    &   \multicolumn{1}{c}{extreme DFF} 
        &   \multicolumn{1}{c}{PPL}
            &   \multicolumn{1}{c}{Normaliz }     \\
    \cmidrule(lr){3-4}
   & & \multicolumn{1}{c}{original} & \multicolumn{1}{c}{minimized} 
                \\ [0.5ex] 
\midrule 
2 & 0 & 4&3 & 1 &1& 0.00006& 0.002\\ 
3 & 1 &  5&5& 2 &1 &0.00009& 0.006\\
5 & 2 &  9&7& 3 & 2&0.00014& 0.007\\
7 & 3 &  15&10& 5 &3 &0.0002& 0.007\\
9 & 4 &  23&14& 9 & 3&0.0004& 0.008\\ 
11 & 5 &  33&18& 14 & 7&0.0006& 0.010\\
13 & 6 & 45 &23& 25 & 8&0.001 & 0.012\\
15 & 7 & 59&29 & 66 & 14&0.003& 0.018\\
17 & 8 &  75&35& 94 & 22&0.005& 0.025\\
19 & 9 & 93&42 & 221 & 32&0.010 & 0.042\\
21 & 10 & 113&50 & 677  &55 &0.036 & 0.105\\
23 & 11 &  135&58& 1360 & 105&0.110 & 0.226\\
25 & 12 &  159&67& 3898 & 189&0.526 & 0.725\\
27 & 13 &  185&77& 12279 & 291&5.1 & 2.991\\
29 & 14 & 213&87 & 28877 & 626&41 & 9.285\\
31 & 15 &  243&98 & 91761 & 1208&595 & 35.461\\
\bottomrule 
\end{tabular}
\label{table:nonlin} 

\end{table}

\end{landscape}

\subsection{Continuous 2-slope DFFs}

In regards to continuous classical extreme DFFs, we observe most of them are 2-slope functions by computer-based search. In contrast to Gomory--Johnson 2-slope Theorem \cite{infinite}, not all 2-slope maximal classical DFFs are extreme. 

We know one of the slope values is always 0 for any 2-slope extreme classical DFF.  However, this necessary condition is not sufficient for extremality. For example, $\phi_{bj,1}(C)=(\floor{Cx}+\max(0,\frac{\{Cx\}-\{C\}}{1-\{C\}}))/\floor{C}
$ is a 2-slope function and one of the slope values is 0, but it is not extreme when $1<C<2$. The reason is there is an uncovered interval $[1-\frac{1}{C}, \frac{1}{C}]$ and we can construct an effective equivariant perturbation on the interval.

Unlike Gomory--Johnson 2-slope functions, even if we assume all intervals are
covered, 2-slope DFFs still can not guarantee there are at most 2 covered
components.  We found a continuous 2-slope extreme DFF with 3 covered
components by using computer-based search (see
\autoref{fig:2s-3c}). Therefore, the technique for proving Gomory--Johnson
2-slope Theorem no longer works in the DFF setting.

\begin{figure}[!htb]
 \includegraphics[width=\linewidth]{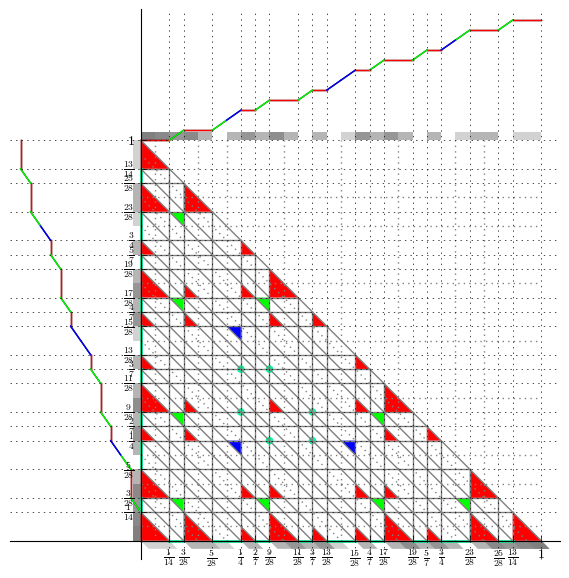}
\caption{A continuous 2-slope extreme DFF with 3 covered components for $q=28$. We use 3 different colors to color additive faces to represent 3 different covered components. The colors on the function are consistent with the colors of additive faces. We plot the function on the left and upper border. The shadows represent covered components from the projections of additive faces in 3 directions.}
\label{fig:2s-3c}
\end{figure}

\begin{conjecture}
Suppose a continuous piecewise linear maximal classical DFF has only 2 values for the derivative wherever it exists (2 slopes) and one slope value is 0. If it has no uncovered components, then it is extreme. 
\end{conjecture}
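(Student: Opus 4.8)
The plan is to run the extremality-test machinery of \autoref{s:extremality}: assume $\phi=\tfrac12(\phi_1+\phi_2)$ with $\phi_1,\phi_2$ maximal classical DFFs and show that the effective perturbation $\tilde\phi=\phi_1-\phi=\phi-\phi_2$ must vanish. Since $\phi$ is continuous and piecewise linear, the continuity/Lipschitz lemma of \autoref{s:extremality} makes $\tilde\phi$ continuous (and, being affine on covered components, piecewise linear). Because $\phi$ has no uncovered interval, $[0,1]$ decomposes into connected covered components $C_1,\dots,C_k$; by the Interval Lemma both $\phi$ and $\tilde\phi$ are affine on each $C_i$, so $\phi$ has a constant slope on $C_i$ — either $0$ or the positive value $s$, one of its two slopes being $0$ — and $\tilde\phi$ has a well-defined slope $\tilde s_i$ there. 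Also $\tilde\phi(0)=0$ and $\tilde\phi(1)=0$ since $\phi^\pm=\phi\pm\epsilon\tilde\phi$ are maximal with $\phi^\pm(0)=0$ and $\phi^\pm(1)=1=\phi(1)$. The goal is to prove the induced linear system forces $\tilde\phi\equiv 0$.

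First I would dispose of the slope-$0$ components. Maximal classical DFFs are nonnegative and superadditive with value $0$ at $0$, hence nondecreasing; since $\phi^\pm$ are maximal and have slope $\pm\epsilon\tilde s_i$ on a component where $\phi$ has slope $0$, we get $\tilde s_i=0$ there. So only the \emph{active} components — those where $\phi$ has slope $s$ — can carry a nonzero $\tilde s_i$. Next the additivity relations: if $F=F(I,J,K)\in\Delta\P$ is a two-dimensional additive face with $I,J,K$ full-dimensional cells, then $\nabla\phi\equiv 0$ on $F$ forces the three slopes of $\phi$ to coincide, and likewise $\nabla\tilde\phi\equiv 0$ on $F$ (which holds because $\tilde\phi$ inherits additivity wherever $\phi$ is additive) forces $\tilde\phi$ to have one common slope on $I$, $J$, $K$. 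Since every active component is covered, for each one there is a two-dimensional additive face whose projection meets it, linking it to two further (necessarily active) components with the same $\tilde\phi$-slope. Forming the graph on active components with an edge whenever two of them occur together in such a face, all active components in one connected cluster share a common perturbation slope.

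If there is a single cluster, the argument closes: then $\tilde\phi$ has one slope $\tilde s$ on the active set and slope $0$ elsewhere, so $0=\tilde\phi(1)-\tilde\phi(0)=\tilde s\cdot L$ with $L>0$ the total length of the active components, whence $\tilde s=0$, and $\tilde\phi(0)=0$ gives $\tilde\phi\equiv 0$; thus $\phi$ is extreme. This is precisely the mechanism behind the Gomory--Johnson $2$-slope theorem and behind \autoref{thm:main}(ii): there, full coverage plus two slopes yield exactly two covered components (one per slope), so a single active cluster, and $\phi(1)=1$ pins its slope down.

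The main obstacle is that, as the $q=28$ example of \autoref{fig:2s-3c} shows, a $2$-slope fully covered classical DFF may have three (hence possibly more) covered components, so the active cluster graph need not a priori be connected; a disconnected graph would leave independent slopes $\tilde s^{(1)},\tilde s^{(2)},\dots$ constrained only by the single balance equation $\sum_j \tilde s^{(j)}L_j=0$, which admits nonzero solutions. To finish one must bring in the structure the bare linear system still carries but the cluster argument ignores: the symmetry condition $\phi(x)+\phi(1-x)=1$, hence $\tilde\phi(x)+\tilde\phi(1-x)=0$, forcing $\tilde s_i=\tilde s_j$ when $C_j=1-C_i$; and the one-dimensional additive faces not contained in any two-dimensional one, each of which again equates $\tilde\phi$-slopes across the intervals it meets and can merge distinct clusters. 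I would try to prove that for a $2$-slope maximal DFF with all intervals covered, these symmetry and additive-edge relations necessarily connect all active components into a single cluster, or at least collapse the balance system to $\tilde s=0$. I expect this last step to be the genuinely hard and delicate part — it is why the statement is only a conjecture — and a disproof, should the conjecture fail, would be a $2$-slope fully covered maximal DFF whose active components split into clusters with opposite perturbation slopes that still balance and respect symmetry.
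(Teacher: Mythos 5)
This statement is stated in the paper as an open conjecture; the paper gives no proof of it, and your proposal does not supply one either. What you have written is a faithful reconstruction of the extremality-test framework of \autoref{s:extremality} (pass to the effective perturbation $\tilde\phi$, use the Interval Lemma on covered components, kill the perturbation slope on the slope-$0$ components by monotonicity of $\phi\pm\epsilon\tilde\phi$, and propagate slope equalities of $\tilde\phi$ through two-dimensional additive faces), together with a correct diagnosis of exactly why this framework does not close: the $q=28$ example in \autoref{fig:2s-3c} shows that a fully covered continuous $2$-slope maximal classical DFF can have three or more covered components, so the cluster graph of active components need not be connected, and the single balance equation $\sum_j \tilde s^{(j)} L_j = 0$ does not force each $\tilde s^{(j)}$ to vanish. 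Up to that point your reasoning is sound and is precisely the mechanism the paper uses in the proof of Theorem~\ref{thm:main}(ii), where the Gomory--Johnson $2$-slope theorem guarantees only two covered components.

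The genuine gap is the step you yourself flag: you would need to show that the symmetry relations $\tilde\phi(x)+\tilde\phi(1-x)=0$ together with the lower-dimensional additive faces always merge the active clusters, or otherwise force the trivial solution of the linear system. You offer no argument for this, only the intention to ``try to prove'' it, and indeed no such argument is known --- that is the entire content of the conjecture. So the proposal should be read as a correct problem reduction and an accurate identification of the obstruction, not as a proof; as it stands, the possibility you describe at the end (two active clusters with opposite perturbation slopes that balance and respect symmetry) is not excluded, and excluding it is exactly what remains open.
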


\end{document}